\documentclass[11pt]{article}
\usepackage[a4paper,left=27mm,right=27mm,top=24mm,bottom=24mm,footskip=25pt]{geometry}
\usepackage[T1]{fontenc}
\usepackage{amsmath,amssymb,amsthm,mathtools}
\usepackage{lmodern}
\usepackage{microtype}
\usepackage[hypertexnames=false,hidelinks]{hyperref}
\numberwithin{equation}{section}
\newtheorem{theorem}{Theorem}[section]
\newtheorem{lemma}[theorem]{Lemma}
\newtheorem{proposition}[theorem]{Proposition}
\newtheorem{corollary}[theorem]{Corollary}
\theoremstyle{remark}
\newtheorem*{remark}{Remark}
\newcommand{\R}{\mathbb R}
\newcommand{\Sn}{\mathbb S^n}
\newcommand{\la}{\lambda}
\newcommand{\ip}[2]{\langle #1,#2\rangle}
\newcommand{\sumall}{\sum_{i=1}^n}
\newcommand{\B}{\sigma_{k-2}(\la\mid i j)}
\allowdisplaybreaks[2]

\usepackage{needspace}
\usepackage{etoolbox}
\usepackage{titlesec}
\titlespacing*{\section}{0pt}{2.3ex plus .5ex minus .2ex}{1.2ex plus .2ex}
\titlespacing*{\subsection}{0pt}{1.9ex plus .4ex minus .2ex}{.8ex plus .2ex}
\titlespacing*{\subsubsection}{0pt}{1.6ex plus .3ex minus .2ex}{.7ex plus .2ex}
\pretocmd{\section}{\Needspace{6\baselineskip}}{}{}
\pretocmd{\subsection}{\Needspace{5\baselineskip}}{}{}
\pretocmd{\subsubsection}{\Needspace{4\baselineskip}}{}{}
\hypersetup{pdftitle={Global Curvature Estimates for k-Convex Hypersurfaces},pdfauthor={Fengrui Yang},pdfsubject={Global curvature estimates for prescribed curvature equations}}
\theoremstyle{plain}
\newtheorem*{maintheorem}{Theorem}

\newcommand{\GRW}{\cite{GRW}}
\newcommand{\SX}{\cite{SX}}
\newcommand{\RWone}{\cite{RWone}}
\newcommand{\RWtwo}{\cite{RWtwo}}
\newcommand{\LT}{\cite{LT}}
\newcommand{\Yang}{\cite{Yang}}
\newcommand{\Zhang}{\cite{Zhang}}
\newcommand{\Yan}{\cite{Yan}}
\newcommand{\Garding}{\cite{Garding}}
\newcommand{\LuSemi}{\cite{LuSemi}}
\newcommand{\Chu}{\cite{Chu}}

\begin{document}
\pagenumbering{arabic}
\begin{center}
{\LARGE Global Curvature Estimates for $k$-Convex Hypersurfaces\par}
\vspace{9pt}
{\large Fengrui Yang\par}
\end{center}
\vspace{7pt}
\begin{center}\textbf{Abstract}\end{center}
\begingroup
\small
\setlength{\parindent}{0pt}
\setlength{\parskip}{5pt}
Establishing global curvature estimates for $k$-convex solutions of prescribed curvature equations is a longstanding problem in fully nonlinear partial differential equations and geometric analysis.

We develop a new approach that combines fractional-linear transformations with the logarithmic concavity of hyperbolic polynomials. The key step is a coercive estimate for the quadratic forms arising from the third-order terms in the maximum-principle argument.

Together with a complementary concavity inequality, this estimate yields global curvature bounds for closed, strictly star-shaped $k$-convex hypersurfaces in $\mathbb R^{n+1}$ satisfying $\sigma_k(\kappa)=f(X,\nu)>0$, throughout the range $3\le k<n<2k$.

\par\endgroup
\vspace{5pt}
\section*{Introduction}
\noindent
This paper addresses the longstanding problem of obtaining global $C^2$ estimates for $k$-convex solutions of the prescribed Weingarten curvature equation
\[
\sigma_k(\kappa(X))=f(X,\nu(X)),\qquad X\in M\subset\mathbb R^{n+1}.
\tag{0.1}\label{eq:intro-curvature}
\]
Here $\kappa=(\kappa_1,\ldots,\kappa_n)$ denotes the principal curvatures of $M$, $\nu$ its outward unit normal, and $\sigma_k$ the $k$-th elementary symmetric polynomial. A hypersurface is called $k$-convex if $\kappa(X)\in\Gamma_k$ at every point, where $\Gamma_k=\{\lambda\in\mathbb R^n:\sigma_j(\lambda)>0,\ 1\leq j\leq k\}$. The choices $k=1,2,n$ correspond, up to normalization, to mean curvature, scalar curvature, and Gauss curvature, respectively.

Equation \eqref{eq:intro-curvature} provides a common analytic framework for several central problems in geometry. These include the Minkowski problem \cite{CY,Nirenberg1953,Pogorelov1953,Pogorelov1978}; Alexandrov's problem of prescribing Weingarten curvature as a function of the outer normal \cite{Alexandrov1956,GG}; the prescription of curvature measures in convex geometry \cite{Alexandrov1942,GLL,GLM,Pogorelov1953}; the construction of hypersurfaces with prescribed curvature \cite{BK,CNS,TW}; the prescribed $L_p$ curvature problem \cite{HI}; and the $L_p$ dual Minkowski problem \cite{BF}, the latter corresponding to $k=n$.

These problems correspond to particular choices of the right-hand side $f(X,\nu)$, within the broader class of geometric equations systematically investigated by Alexandrov \cite{Alexandrov1956}. When $k=1$, equation \eqref{eq:intro-curvature} is quasilinear, and $C^2$ estimates follow from classical theory. When $k=n$, it is of Monge--Amp\`ere type; estimates allowing general dependence on $X$ and $\nu$ were established in the foundational work of Caffarelli, Nirenberg, and Spruck \cite{CNSI}. For the intermediate cases $1<k<n$, important early results addressed special forms of $f$. Caffarelli, Nirenberg, and Spruck \cite{CNS} treated $f=f(X)$ for a broader class of curvature operators, including $\sigma_k$ and curvature quotients $\sigma_k/\sigma_l$. Guan and Guan \cite{GG} obtained estimates for convex hypersurfaces when $f=f(\nu)$. For prescribed curvature measures, Guan, Lin, and Ma \cite{GLM} and Guan, Junfang Li, and Yanyan Li \cite{GLL} made substantial contributions by establishing curvature estimates for the special form $f(X,\nu)=u\,\widetilde f(X)$, where $u=\langle X,\nu\rangle$ is the support function. The author subsequently solved the corresponding prescribed curvature measure problem for star-shaped $k$-convex hypersurfaces in hyperbolic space, establishing existence and uniqueness for $3\le k<n$ \Yang.

\noindent
The geometric scope of \eqref{eq:intro-curvature} naturally leads to the study of the equation with a general positive right-hand side $f(X,\nu)$, allowing simultaneous dependence on position and normal. A central question is whether global curvature estimates can be obtained under the natural $k$-convexity assumption.

Guan, Ren, and Wang \GRW\ made a major advance by establishing global curvature estimates for closed convex hypersurfaces and, when $k=2$, for closed star-shaped $2$-convex hypersurfaces. Spruck and Xiao \SX\ gave an elegant and inspiring proof for the scalar-curvature case, which also applies in space forms. A simpler proof of the estimate in the convex case was subsequently given by Chu \Chu.

For higher curvature orders, Ren and Wang established global curvature estimates for closed star-shaped $k$-convex hypersurfaces in the cases $k=n-1$ \RWone\ and $k=n-2$ \RWtwo. These results allow general positive $f(X,\nu)$ without imposing convexity beyond $k$-convexity. Lu and Tsai \LT\ later gave a simpler proof of the $k=n-1$ estimate. Recently, Jin Yan \Yan\ presented a proof of global curvature estimates for closed star-shaped $k$-convex hypersurfaces in the range $n/2\le k<n$. The results in this paper were obtained independently of Yan's work, and the proof of the key quadratic-form estimate is completely different.

Another line of progress concerns semi-convex solutions, for which the principal curvatures satisfy a prescribed lower bound $\kappa_i\ge -K$. Lu \LuSemi\ established global curvature estimates for closed, strictly star-shaped, semi-convex $k$-convex hypersurfaces in hyperbolic space, allowing general positive $f(X,\nu)$. In Euclidean space, Zhang \Zhang\ introduced a concavity inequality that yields a new proof of the semi-convex global curvature estimate noted by Guan, Ren, and Wang \GRW. These estimates depend on the prescribed semi-convexity bound $K$.

\par\addvspace{.7\baselineskip}

In this paper, we establish global curvature estimates for general positive $f(X,\nu)$ throughout the range
\[
3\le k<n<2k.
\]
Our proof uses the classical maximum principle together with a new construction adapted to the quadratic forms arising in the largest-principal-curvature calculation. We state the closed-hypersurface case here; Theorem~2.1 also gives the corresponding estimate in terms of the boundary curvature when $M$ has boundary.

\begin{maintheorem}[Global curvature estimate]
Let $3\le k<n<2k$, and let $M\subset\R^{n+1}$ be a closed $C^4$, strictly star-shaped, $k$-convex hypersurface satisfying
\[
\sigma_k(\lambda(X))=f(X,\nu(X)).
\]
Suppose that $f\in C^2(U)$ is positive on an open neighborhood $U$ of the unit normal bundle of $M$, and that
\[
|X|\le R,\quad \langle X,\nu\rangle\ge u_*>0,\quad
0<M_1\le f\le M_2,\quad \|f\|_{C^2(U)}\le L.
\]
Then
\[
\max_{X\in M}\max_{1\le i\le n}|\lambda_i(X)|
\le C(n,k,R,u_*,M_1,M_2,L).
\]
\end{maintheorem}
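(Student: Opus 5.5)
We follow the standard maximum-principle route of Guan--Ren--Wang \GRW, with a new treatment of the third-order terms.

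\textbf{Setup.} Since $M$ is closed, strictly star-shaped and $\kappa\in\Gamma_k$, we have $\sigma_1>0$. Hence it suffices to bound $\lambda_1=\max_i\lambda_i$ from above: $|\lambda_i|\le (n-1)\lambda_1$. The $C^0$ and $C^1$ bounds $|X|\le R$, $u=\ip{X}{\nu}\ge u_*$ are assumed. We consider the test function
\[
\phi=\log\lambda_1-\log(u-a)+\frac{N}{2}|X|^2,\qquad 0<a<u_*/2 ,
\]
with $N$ large, or the variant using a perturbed $P_m=\sum\kappa_j^m$ to handle multiplicity of $\lambda_1$. At a maximum point we choose an orthonormal frame with $h_{ij}=\lambda_i\delta_{ij}$ and $\lambda_1\ge\dots\ge\lambda_n$. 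We differentiate $\sigma_k(h)=f(X,\nu)$ twice, commute covariant derivatives via the Gauss--Codazzi equations, and apply $\sigma_k^{ii}\nabla_{ii}$ to $\phi$. The usual computation gives, at that point,
\[
0\ge \frac{1}{\lambda_1}\Big(-\sigma_k^{pq,rs}h_{pq1}h_{rs1}-C\lambda_1^2\sum\sigma_k^{ii}\Big)+2\sum_{i>1}\frac{\sigma_k^{ii}h_{11i}^2}{\lambda_1(\lambda_1-\lambda_i)}-\sum_i\frac{\sigma_k^{ii}h_{11i}^2}{\lambda_1^2}+\big(\tfrac{a}{u-a}+N\big)\sum\sigma_k^{ii}\cdots .
\]
The terms with $f_\nu h_{11}$ are absorbed because $-\log(u-a)$ produces $\frac{a}{u-a}\sum\sigma_k^{ii}\lambda_i^2$, and $\sum\sigma_k^{ii}\lambda_i^2\ge c\,\lambda_1\sigma_1\cdots$.

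\textbf{Key step: the concavity inequality.} Everything reduces to showing that the quadratic form in the third derivatives,
\[
Q=-\frac{\sigma_k^{pq,rs}h_{pq1}h_{rs1}}{\sigma_k}+\frac{(\nabla_1\sigma_k)^2}{\sigma_k^2}\cdot(\text{const})+2\sum_{i>1}\frac{\sigma_k^{ii}h_{11i}^2}{\sigma_k(\lambda_1-\lambda_i)}-(1+\epsilon)\frac{\sigma_k^{11}h_{111}^2}{\sigma_k\lambda_1},
\]
is nonnegative, up to errors controlled by $\sum\sigma_k^{ii}$, whenever $\lambda_1\ge C$. We split the estimate into two parts.

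\emph{(a) The off-diagonal part} $\sum_{i\ne j}\sigma_k^{ii,jj}h_{ij1}^2$ together with the $h_{11i}$ terms. For this we use the identity $-\sigma_k^{1i,i1}=\sigma_{k-2}(\la\mid 1i)$ and the Codazzi symmetry $h_{1i1}=h_{11i}$.

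\emph{(b) The diagonal part} $-\sigma_k^{ii,jj}h_{ii1}h_{jj1}$. Here we plan to use the logarithmic concavity of $\sigma_k$, a hyperbolic polynomial in G{\aa}rding's sense, to obtain
\[
-\sigma_k^{ii,jj}\xi_i\xi_j+\frac{(\sum\sigma_k^{ii}\xi_i)^2}{\sigma_k}\ge0 .
\]
The defect then has to be upgraded to a \emph{coercive} bound of the form
\[
\ge c\,\frac{\sigma_k^{11}\xi_1^2}{\lambda_1}\sigma_k-\text{(small)}.
\]
To get this we apply a fractional-linear change of variables $\lambda\mapsto \lambda_i/(1+t\lambda_i)$, or equivalently restrict $\sigma_k$ to the line through $\lambda$ in direction $e_1$ composed with a M\"obius map. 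The isolated variable $\lambda_1$ then becomes bounded, and log-concavity yields the coercive constant.

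The hypothesis $n<2k$ should enter exactly here. It guarantees $\sigma_{k-1}(\la\mid 1)\ge c\,\sigma_{k-1}(\la)$ and that at most $n-k<k$ eigenvalues can be negative. This makes the lower-order symmetric functions $\sigma_{k-2}(\la\mid ij)$ positive enough to close the estimate when $\lambda_2$ is comparable to $\lambda_1$. In the complementary case $\lambda_2\le\delta\lambda_1$, the standard GRW argument should apply directly.

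\textbf{Conclusion and main obstacle.} With $Q\ge0$ established, the inequality at the maximum becomes
\[
0\ge\Big(\frac{a}{u-a}c-C\Big)\lambda_1\sum\sigma_k^{ii}+N\sum\sigma_k^{ii}-C .
\]
Choosing $N$ and then $\lambda_1$ large gives a contradiction, so $\lambda_1\le C$.

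The main obstacle is the coercive estimate in (b): showing that the negative term $-(1+\epsilon)\sigma_k^{11}h_{111}^2/\lambda_1$ is dominated, uniformly in the configuration of the intermediate eigenvalues. We would attempt it first via the fractional-linear transform combined with log-concavity. If that fails, we would fall back on a case split according to the index $m$ such that $\lambda_m\ge\delta\lambda_1>\lambda_{m+1}$, arguing by induction on $m\le k-1$.
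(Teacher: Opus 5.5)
\textbf{There is a genuine gap: the key quadratic-form estimate is not proved, and the mechanism sketched for it does not work as stated.}

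Your outer framework is fine and agrees in substance with the paper's. After the maximum-principle computation, the paper uses $\log\lambda_1-N\log u$, plus an upper-support form of the Brendle--Choi--Daskalopoulos lemma for repeated $\lambda_1$. Everything then reduces to showing $L_\lambda(x)=-\tfrac12H[x,x]+\sum_{i>1}\frac{F^i}{\lambda_1-\lambda_i}x_i^2-\frac{F^1}{2\lambda_1}x_1^2\ge-C\lambda_1^2$ for $x_i=h_{ii1}$ under $\sum_iF^ix_i=C_1\lambda_1$. That estimate, which you yourself flag as the main obstacle, is where all the difficulty lies.

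Log-concavity of $\sigma_k$ alone gives only $-H[\xi,\xi]\ge-s^2/f$ and produces none of the diagonal terms. A single M\"obius map $\lambda_i\mapsto\lambda_i/(1+t\lambda_i)$ in every coordinate adds diagonal terms $F^i\,\frac{2t}{1+t\lambda_i}\,y_i^2$ to the Hessian, all of one sign. To absorb $-F^1x_1^2/(2\lambda_1)$ with a margin you need $t>1/\lambda_1$. Then the cleared polynomial $\sum_{|S|=k}z^S\prod_{i\notin S}(1-tz_i)$ has negative coefficients, and nothing places the image point in a hyperbolicity cone.

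Log-concavity can in fact fail. Take $n=4$, $k=3$, $\lambda=(1,0.9,0.9,-0.3)$ (which satisfies $f\le\varepsilon_I\lambda_1F^1$) and $t=3/2$. The cleared polynomial is $\sigma_3(z)-6\sigma_4(z)$. Its logarithm has positive second derivative at the image point along the direction corresponding to $y=(F^4,0,0,-F^1)$, which satisfies $\sum_iF^iy_i=0$.

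The missing idea is a coordinate-dependent choice with opposite convexity: $m_1(z)=\frac{az}{a-\gamma z}$ and $m_i(z)=\frac{az}{a+\beta z}$ for $i\ge2$. Here $\gamma=\frac{n}{2(n-k)}>1$, $\beta=\frac{2k+n}{4k}<1$ and $\beta>\gamma\frac{n-k}{k}$; all three conditions are compatible exactly because $n<2k$. With this choice:
\begin{itemize}
\item the cleared polynomial has positive coefficients and is real stable;
\item its homogenization is hyperbolic, with $(1,q)$ in the cone;
\item log-concavity on $\sum_iF^iy_i=0$ reproduces $L_\lambda$ with strictly positive remainders, giving $L_\lambda(y)\ge\frac{\theta}{2\lambda_1}\sum_iF^iy_i^2$.
\end{itemize}
The affine constraint is then handled by the shift $y=x-\frac{s}{F^1}e_1$ together with $G_i^2\le\frac{k(n-k)}{n-1}F^1F^i$.

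Even with that construction, the errors $\tfrac f2\sum_i\eta_i^2$ from the cleared denominators can be absorbed only when $f\le\varepsilon_I\lambda_1F^1$. So the paper splits according to $f$ versus $\lambda_1F^1$, not $\lambda_2$ versus $\lambda_1$. In the complementary regime it proves $(-\lambda_n)_+^k\le C\lambda_1F^1$, hence $\lambda_n\ge-B$. It then applies Zhang's concavity inequality for semi-convex vectors \cite{Zhang}.

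Your assertion that ``the standard GRW argument'' covers $\lambda_2\le\delta\lambda_1$ is unsupported. Your fallback, induction on the number of eigenvalues comparable to $\lambda_1$, is the strategy of \cite{GRW,RWone,RWtwo}. That strategy has been closed only for convex hypersurfaces and for $k=2,\,n-1,\,n-2$.

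Finally, $n<2k$ does not give $\sigma_{k-1}(\lambda\mid1)\ge c\,\sigma_{k-1}(\lambda)$; this fails for $\lambda=(\Lambda,1,\ldots,1)$ as $\Lambda\to\infty$. The hypothesis actually enters in two places. One is $\lambda_1+\lambda_i\ge\frac{2k-n}{k}\lambda_1>0$, which makes the combined $h_{11i}^2$ coefficients positive. The other is the parameter compatibility above.
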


\noindent
The central idea is to design a change of eigenvalue variables that captures the full quadratic form arising in the curvature estimate. The novelty lies in encoding its Hessian and diagonal contributions in a single concavity structure, since their combined control does not follow directly from the standard concavity of $\sigma_k^{1/k}$ and $(\sigma_k/\sigma_l)^{1/(k-l)}$ alone.

To achieve this, we carefully choose coordinatewise fractional-linear transformations so that both contributions arise in a single second-derivative calculation. Clearing denominators and homogenizing then produces a hyperbolic polynomial. Its logarithmic concavity \Garding\ is used along transformed tangent directions, where the first-derivative square of the composed curvature function vanishes.

For this logarithmic concavity to yield the required estimate, the transformations must also retain a quantitative margin. Choosing $\gamma>1$ for the largest coordinate and $\beta<1$ for the others leaves strictly positive quadratic remainders in every coordinate; the condition $n<2k$ makes these choices compatible with the required polynomial positivity. These remainders absorb the denominator errors and yield the needed coercivity. Together with a complementary concavity inequality \Zhang, this completes the proof.

Section~1 records the preliminary identities. Section~2 states the main theorem and derives the maximum-principle inequality. Section~3 develops the fractional-linear transformation and the quantitative estimate, and Section~4 treats the complementary regime. Section~5 handles repeated largest principal curvatures and completes the proof. The polynomial facts used in Section~3 are collected in Appendix~A.

The author is deeply grateful to Professor Pengfei Guan for his help and encouragement over the years.

\section{Preliminaries}

Let $M\subset\R^{n+1}$ be an oriented hypersurface with position vector $X$,
unit normal $\nu$, induced metric $g$, and second fundamental form $h$.
We use the convention
\[
 D_{e_i}\nu=\sum_{j=1}^n h_{ij}e_j,
 \qquad D_{e_i}e_j=\nabla_{e_i}e_j-h_{ij}\nu,
\]
where $D$ and $\nabla$ are the Euclidean and induced connections, respectively.
Thus a sphere has positive principal curvatures with respect to its outer
normal. We write $h_{ij\ell}=(\nabla_{e_\ell}h)_{ij}$ and
$h_{ij\ell r}=(\nabla_{e_r}\nabla_{e_\ell}h)_{ij}$, with the usual covariant
interpretation of the latter notation. Let
$\la=(\la_1,\ldots,\la_n)$ be the principal curvature vector, ordered so that
$\la_1\ge\cdots\ge\la_n$. We set
\[
 \Gamma_k=\{\la\in\R^n:\sigma_j(\la)>0,\ 1\le j\le k\},
\]
and call $M$ $k$-convex if $\la(X)\in\Gamma_k$ for every $X\in M$.
Here $\sigma_j$ is the unnormalized $j$th elementary symmetric polynomial;
we set $\sigma_0=1$, and $\sigma_j=0$ when the degree exceeds the number of variables;
$\sigma_j(\la\mid i)$ and $\sigma_j(\la\mid ij)$ denote deletion of the indicated
components.

Regard $F(h)=\sigma_k(\la(h))$ as a function of the second fundamental form,
and write
\[
 F^{ij}=\sigma_k^{ij}=\frac{\partial F}{\partial h_{ij}},
 \qquad
 F^{ij,pq}=\sigma_k^{ij,pq}
       =\frac{\partial^2F}{\partial h_{ij}\partial h_{pq}}.
\]
In an orthonormal frame in which $h$ is diagonal,
\begin{equation}\label{eq:first-F}
 F^{ij}=\sigma_{k-1}(\la\mid i)\delta_{ij},
 \qquad F^i:=F^{ii}=\sigma_{k-1}(\la\mid i).
\end{equation}
The only possibly nonzero second derivatives are
\begin{equation}\label{eq:second-F}
 \begin{aligned}
 F^{ii,jj}&=\sigma_{k-2}(\la\mid ij),&&i\ne j,\\
 F^{ij,ji}&=-\sigma_{k-2}(\la\mid ij),&&i\ne j.
 \end{aligned}
\end{equation}
We use the full-index convention, so for every symmetric matrix $\eta$,
\begin{equation}\label{eq:F-contraction}
 \sum_{i,j,p,q=1}^n F^{ij,pq}\eta_{ij}\eta_{pq}
 =\sum_{i\ne j}\B\eta_{ii}\eta_{jj}
   -\sum_{i\ne j}\B\eta_{ij}^2.
\end{equation}
In particular, both sums over $i\ne j$ are ordered sums. We will also use
\begin{equation}\label{eq:F-identities}
 \sumall F^i\la_i=kF,
 \qquad
 F^i-F^j=(\la_j-\la_i)\sigma_{k-2}(\la\mid ij)
 \quad (i\ne j).
\end{equation}
These identities hold without any assumption on the multiplicities of the
principal curvatures.

For $\la\in\Gamma_k$, the standard cone properties give
\begin{equation}\label{eq:cone}
 F^i>0,\qquad \sigma_{k-2}(\la\mid ij)>0\quad(i\ne j),
 \qquad \la_n\ge-\frac{n-k}{k}\la_1;
\end{equation}
see, for example, \cite[Lemma 2.2]{LT}. Consequently, if $n\le2k-1$,
\begin{equation}\label{eq:positive-sum}
 \la_1+\la_i\ge\frac{2k-n}{k}\la_1
 \ge\frac{\la_1}{k}>0\quad(1\le i\le n).
\end{equation}
In particular, $|\la_i|\le\la_1$ in the dimension range considered below.

We also record the standard lower bound
\begin{equation}\label{eq:cone-lower-bound}
 \la_1\sigma_{k-1}(\la\mid1)\ge\frac{k}{n}\sigma_k(\la).
\end{equation}
See \cite[Lemma 2.2]{LT}.
The concavity of $\sigma_k^{1/k}$ on $\Gamma_k$ gives
\begin{equation}\label{eq:sigma-concavity}
 D^2\sigma_k(\la)[\xi,\xi]
 \le\frac{k-1}{k\sigma_k(\la)}
          \left(\sumall F^i\xi_i\right)^2,
 \qquad \xi\in\R^n.
\end{equation}
We use throughout the standard deletion positivity
$\sigma_{k-r}(\la\mid I)>0$ for $|I|=r$ and $1\le r\le k$,
as well as the Newton--Maclaurin inequalities.

Let $u=\ip{X}{\nu}$ be the support function. The Codazzi equation and the
support-function identities are
\begin{equation}\label{eq:codazzi-support}
 \begin{aligned}
 h_{ij\ell}&=h_{i\ell j},\\
 u_i&=\sum_{j=1}^n h_{ij}\ip{X}{e_j},\\
 u_{ij}&=\sum_{\ell=1}^n h_{ij\ell}\ip{X}{e_\ell}
           +h_{ij}-u\sum_{\ell=1}^n h_{i\ell}h_{j\ell}.
 \end{aligned}
\end{equation}
At a point where $h$ is diagonal, these reduce to
\begin{equation}\label{eq:u-diagonal}
 u_i=\la_i\ip{X}{e_i},
 \qquad u_{ii}=\sum_{\ell=1}^n h_{ii\ell}\ip{X}{e_\ell}
                     +\la_i-u\la_i^2.
\end{equation}
At the same point, the Euclidean commutation formula reads
\begin{equation}\label{eq:commutation}
 h_{11ii}=h_{ii11}+\la_1^2\la_i-\la_1\la_i^2.
\end{equation}
See also \cite[Section 2]{LT} for these geometric identities.

\section{Curvature estimates}

\begin{theorem}\label{thm:curvature}
Let $k\ge3$ and $k+1\le n\le2k-1$. Let $M\subset\R^{n+1}$ be a compact
$C^4$, strictly star-shaped, $k$-convex hypersurface, possibly with boundary,
satisfying
\begin{equation}\label{eq:curvature-equation}
 \sigma_k(\la(X))=f(X,\nu(X)),\qquad X\in M.
\end{equation}
Suppose that $f\in C^2(\mathcal U)$ is positive, where $\mathcal U$ is an open
neighborhood of the unit normal bundle of $M$ in $\R^{n+1}\times\Sn$.
Assume that the given bounds are
\[
 |X|\le R,\qquad u=\ip{X}{\nu}\ge u_*>0,\qquad
 0<M_1\le f\le M_2,\qquad \|f\|_{C^2(\mathcal U)}\le L.
\]
Then there is a constant $C=C(n,k,R,u_*,M_1,M_2,L)$ such that
\begin{equation}\label{eq:curvature-estimate}
 \max_{X\in M}\max_{1\le i\le n}|\la_i(X)|
 \le C\left(1+\max_{X\in\partial M}\max_{1\le i\le n}|\la_i(X)|\right).
\end{equation}
When $\partial M=\varnothing$, the boundary term is omitted.
\end{theorem}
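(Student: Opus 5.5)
We follow the standard maximum-principle scheme for star-shaped hypersurfaces (as in \GRW, \RWone, \LT). Consider on $M$ the test function
\[
 \phi=\log\la_1-\log(u-a)+\frac{N}{2}|X|^2,\qquad a=\tfrac12 u_*,
\]
with $N$ large. By \eqref{eq:positive-sum}, $|\la_i|\le\la_1$, so it suffices to bound $\la_1$. If the maximum of $\phi$ lies on $\partial M$, \eqref{eq:curvature-estimate} follows at once. Otherwise it is attained at an interior point $X_0$, where we choose a frame diagonalizing $h$. When $\la_1$ is simple, $\la_1$ is smooth near $X_0$; repeated largest eigenvalues we handle by a standard perturbation (replacing $h$ by $h-\epsilon B$ for a suitable fixed matrix $B$), or by working with $\log$ of a symmetric function of the top eigenvalues, and then passing to the limit.

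At $X_0$ we use $\phi_i=0$ and $F^{ii}\phi_{ii}\le0$. Differentiating \eqref{eq:curvature-equation} twice in the $e_1$ direction gives
\[
 F^{ii}h_{ii11}+F^{ij,pq}h_{ij1}h_{pq1}=d_X f\text{-terms}+\sum_\ell h_{11\ell}d_\nu f(e_\ell)+O(\la_1^2),
\]
where the $h_{11\ell}$ terms are absorbed through $\phi_\ell=0$. We then insert the commutation formula \eqref{eq:commutation} and the support-function identities \eqref{eq:u-diagonal}. The term $-u\la_i^2$ in $u_{ii}$ gives the good quantity $\frac{u}{u-a}F^i\la_i^2$, the term $N|X|^2$ gives $N\sum F^i$, and $\la_1^2\la_iF^i$ contributes $k f\la_1^2$. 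This yields the key inequality
\[
 0\ge -\frac{1}{\la_1}\sum F^{ij,pq}h_{ij1}h_{pq1}-\frac{F^{ii}h_{11i}^2}{\la_1^2}+c\sum F^i\la_i^2+N\sum F^i-C\la_1-C\sum F^i .
\]
Expanding by \eqref{eq:F-contraction}, the first term equals $-\frac1{\la_1}\sum_{p\ne q}\sigma_{k-2}(\la|pq)h_{pp1}h_{qq1}+\frac{2}{\la_1}\sum_{i\ge2}\sigma_{k-2}(\la|1i)h_{11i}^2$, plus further nonnegative terms. Using $F^1-F^i=(\la_i-\la_1)\sigma_{k-2}(\la|1i)$ together with the first-order condition $h_{11i}/\la_1=\frac{u_i}{u-a}-N\ip{X}{e_i}$, the remaining question is whether the full quadratic form
\[
 Q=-\sum_{p\ne q}\sigma_{k-2}(\la|pq)\xi_p\xi_q+\sum_{i\ge2}\frac{2\sigma_{k-2}(\la|1i)\la_1-F^i}{\la_1}\xi_i^2\cdots,\qquad \xi_p=h_{pp1},
\]
dominates $-(1+\epsilon)F^1h_{111}^2/\la_1^2$ up to controllable errors.

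This quadratic-form step is the main obstacle, and we would split into two regimes. \textbf{Regime A:} $\la_i$ for $i\ge2$ is not comparable to $\la_1$, say $\la_2\le\delta\la_1$. Here we would prove the coercive estimate by composing $\log\sigma_k$ with the fractional-linear maps $\la_1\mapsto \la_1/(1-\gamma t\la_1)$-type and $\la_i\mapsto$ analogous maps with parameter $\beta<1$. Clearing denominators produces a hyperbolic polynomial whose $\log$ is concave by Gårding \Garding; evaluated along the direction where $\sum F^i\xi_i=0$ (the differentiated equation), this gives the Hessian plus diagonal terms with a strictly positive margin $(\gamma-1)$, $(1-\beta)$. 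That margin absorbs the $\la_1\sum F^i$ and denominator errors, and the condition $n<2k$ (via \eqref{eq:positive-sum}) keeps the transformed point in the cone. \textbf{Regime B:} all $\la_i$ are comparable to $\la_1$, or the good term $\sum F^i\la_i^2\ge c\la_1^2\sum F^i$ dominates. Then we use Zhang's concavity inequality \Zhang\ together with \eqref{eq:cone-lower-bound} and \eqref{eq:sigma-concavity}, so that $c F^i\la_i^2$ beats the bad gradient term, whose size is $\le C N^2\sum F^i$, after choosing $N$ and then $\la_1$ large.

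In both regimes the result is $\la_1(X_0)\le C$, hence $\phi\le C$ everywhere. This gives \eqref{eq:curvature-estimate}.
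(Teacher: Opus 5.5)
\textbf{There is a genuine gap.} Your reduction throws away the term the whole estimate rests on.

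\textbf{The missing diagonal term.} In the second derivative of $\la_1$ you drop $\frac{2}{\la_1}\sumall\sum_{p\ge2}\frac{F^ih_{1pi}^2}{\la_1-\la_p}$. You then put the coefficients $\frac{2\la_1\sigma_{k-2}(\la\mid1i)-F^i}{\la_1}$ on $\xi_i^2=h_{ii1}^2$. But these coefficients multiply $h_{11i}^2=h_{1i1}^2$, which is an off-diagonal third derivative and a different variable. So in the variables $\xi_p=h_{pp1}$, your inequality keeps only $-\frac1{\la_1}\sum_{p\ne q}\sigma_{k-2}(\la\mid pq)\xi_p\xi_q-F^1\xi_1^2/\la_1^2$, with no positive diagonal part.

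The paper keeps the $i=p$ part of the dropped sum, $\frac{2}{\la_1}\sum_{i\ge2}\frac{F^ih_{ii1}^2}{\la_1-\la_i}$. This is exactly the diagonal part of $L_\la$ in \eqref{eq:L-definition}, and both tools you cite need it:
\begin{itemize}
\item The fractional-linear maps are designed so that a single second derivative reproduces $\sum_{i\ge2}\frac{F^i}{\la_1-\la_i}y_i^2$; with $\gamma=\beta=1$ it gives $L_\la$ exactly.
\item Zhang's inequality carries $2\sum_{i\ge2}\frac{\sigma_{k-1}(\mu\mid i)}{\mu_1+A+1}\xi_i^2$ on its left-hand side, and the paper pays for this with the terms $\frac{F^i}{\la_1-\la_i}x_i^2$.
\end{itemize}

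\textbf{The test function.} Your test function also cannot close the argument for $k\ge3$. After the commutation formula, $-\log(u-a)$ leaves the good term $\frac{a}{u-a}\sumall F^i\la_i^2$ with a fixed coefficient. The errors, however, are of the same order with constants you do not control:
\begin{itemize}
\item $-C\la_1$ from $\la_1^2d^2_{\nu\nu}f$ in $f_{11}/\la_1$;
\item $-2C_2\la_1$ from a bound $L_\la\ge-C_2\la_1^2$, where $C_2$ is not small because $\sumall F^ih_{ii1}=f_1=C_1\la_1$.
\end{itemize}
Both $\sumall F^i\la_i^2$ and $N\sumall F^i$ can be small, as for $\la\approx(\la_1,\epsilon,\ldots,\epsilon)$: the first can be of order $\la_1$ and the second of order $N\la_1^{1/(k-1)}$. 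So the term $\frac N2|X|^2$ does not help. The paper uses $-N\log u$, which gives $(N-1)\sumall F^i\la_i^2\ge(N-1)c_0\la_1$.

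\textbf{The regime split.} Your split is the wrong one, on both sides.
\begin{itemize}
\item \emph{Fractional-linear side.} Logarithmic concavity gives $L_\la(y)\ge\sum_iA_iy_i^2-\frac f2\sum_i\eta_i^2$, as in \eqref{sm:good-error}. The margins from $\gamma>1>\beta$ absorb the denominator error only when $f$ is small relative to $\la_1F^1$; already the first coordinate needs $f<\frac{\gamma^2-1}{\gamma^2}\la_1F^1<\la_1F^1$. The condition $\la_2\le\delta\la_1$ does not ensure this, since every $\la$ with $\la_n\ge0$ has $f\ge\la_1F^1$. Also, $n<2k$ enters there through positivity of the coefficients of the cleared polynomial ($\beta>\gamma(n-k)/k$), not through membership of the transformed point in the cone.
\item \emph{Zhang side.} Zhang's inequality needs a uniform semi-convexity bound $\mu_n>-A$ after normalizing $\sigma_k(\mu)=1$. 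The condition ``all $\la_i$ comparable to $\la_1$'' gives no lower bound on $\la_n$.
\end{itemize}
The missing idea is to split according to whether $f\le\varepsilon_I\la_1F^1$ or $f\ge\varepsilon_I\la_1F^1$. In the second case, Lemma~\ref{lg:spectral-bound} gives $(-\la_n)_+^k\le C\la_1F^1\le Cf/\varepsilon_I$, which is exactly the bound $\la_n\ge-B$ that Zhang's lemma needs. As stated, your two regimes do not even cover all configurations: for example, $\la_2$ close to $\la_1$ while some $\la_i$ is near $0$ and $F^i$ is large.

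\textbf{The affine constraint.} Finally, $\sumall F^ih_{ii1}=f_1$ is of order $\la_1$, not zero. So logarithmic concavity on the hyperplane $\sumall F^iy_i=0$ is not enough by itself. You also need:
\begin{itemize}
\item the quantitative tangential bound $L_\la(y)\ge\frac{\theta}{2\la_1}\sumall F^iy_i^2$ produced by the margins;
\item the exact shift identity for $y=x-\frac{s}{F^1}e_1$;
\item the mixed-coefficient bound $G_i^2\le\mathcal R_{n,k}F^1F^i$ of Lemma~\ref{sm:mixed}, to control the cross term.
\end{itemize}
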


\noindent\textit{Proof (maximum-principle computation).}
We use the standard maximum-principle framework of
\cite[Theorem 16]{GRW}, \cite[Theorem 4.1]{LT}, and
\cite[Section 4]{Yang}. Consider
\begin{equation}\label{eq:test-function}
 \phi=\log\la_1-N\log u,
\end{equation}
where $N\ge1$ is a sufficiently large constant, depending only on the data in
Theorem~\ref{thm:curvature}, to be fixed in the subsequent estimates.
Since $u_*\le u\le R$, an upper bound for $\la_1$ at a maximum point of $\phi$
yields an upper bound on all of $M$. If the maximum is attained on $\partial M$,
\eqref{eq:curvature-estimate} follows directly from this observation and
\eqref{eq:positive-sum}. We may therefore assume that $\phi$ attains its maximum
at an interior point $x_0$. We may also assume that $\la_1(x_0)\ge1$.

To handle possible multiplicities, define the smooth upper supporting function
\begin{equation}\label{eq:upper-support}
 \psi(x)=e^{\phi(x_0)}u(x)^N
        =\la_1(x_0)\left(\frac{u(x)}{u(x_0)}\right)^N.
\end{equation}
Then $\psi\ge\la_1$ and $\psi(x_0)=\la_1(x_0)$. Moreover,
\begin{equation}\label{eq:constant-test}
 \widetilde\phi:=\log\psi-N\log u\equiv\phi(x_0).
\end{equation}
Choose a local orthonormal frame $e_1,\ldots,e_n$, geodesic at $x_0$, such that
$h_{ij}(x_0)=\la_i\delta_{ij}$. Write
\begin{equation}\label{eq:multiplicity}
 \la_1=\cdots=\la_m>\la_{m+1}\ge\cdots\ge\la_n,
 \qquad 1\le m\le n.
\end{equation}
If $m=n$, the strict comparison with $\la_{m+1}$ is omitted, and all sums
over indices larger than $m$ are understood to be empty.
All quantities and covariant derivatives in the rest of the computation are
evaluated at $x_0$.

The upper-support version of the eigenvalue-contact lemma of
Brendle--Choi--Daskalopoulos \cite[Lemma 5]{BCD} gives
\begin{equation}\label{eq:contact-first}
 h_{pq i}=\delta_{pq}\psi_i,
 \qquad 1\le p,q\le m,\quad 1\le i\le n,
\end{equation}
and
\begin{equation}\label{eq:contact-second}
 \psi_{ii}\ge h_{11ii}
        +2\sum_{p>m}\frac{h_{1pi}^2}{{\la_1}-\la_p},
 \qquad 1\le i\le n.
\end{equation}
The cited lemma is stated for a lower support of the smallest eigenvalue;
applying its tensor argument to $-h$ gives
\eqref{eq:contact-first}--\eqref{eq:contact-second}.
In particular, $\psi_i=h_{11i}$.

Differentiating \eqref{eq:constant-test} once gives the critical equation
\begin{equation}\label{eq:critical}
 0=\frac{\psi_i}{\psi}-N\frac{u_i}{u}
   =\frac{h_{11i}}{\la_1}-N\frac{u_i}{u}
   =\frac{h_{11i}}{\la_1}-\frac{N\la_i\ip{X}{e_i}}{u},
 \qquad 1\le i\le n.
\end{equation}
Differentiating twice, contracting with $F^{ii}=F^i>0$, and using
\eqref{eq:contact-second}, we obtain
\begin{equation}\label{eq:max-second}
 \begin{aligned}
 0=\sumall F^i\widetilde\phi_{ii}
 &\ge \frac{1}{\la_1}\sumall F^i h_{11ii}
   +\frac{2}{\la_1}\sumall\sum_{p>m}\frac{F^i h_{1pi}^2}{{\la_1}-\la_p}
   -\frac1{{\la_1}^2}\sumall F^i h_{11i}^2\\
 &\hspace{8mm}-\frac Nu\sumall F^i u_{ii}
   +\frac N{u^2}\sumall F^i u_i^2.
 \end{aligned}
\end{equation}
This is the rigorous supporting-function interpretation of the
maximum-principle inequality for $\sum_i\sigma_k^{ii}\phi_{ii}$.

By \eqref{eq:commutation} and \eqref{eq:F-identities},
\begin{equation}\label{eq:commuted-contracted}
 \frac{1}{\la_1}\sumall F^i h_{11ii}
 =\frac{1}{\la_1}\sumall F^i h_{ii11}
     +kf{\la_1}-\sumall F^i\la_i^2.
\end{equation}
Write $f_i$ and $f_{11}$ for covariant derivatives on $M$ of the composite
function $X\mapsto f(X,\nu(X))$. Differentiating
\eqref{eq:curvature-equation} once yields
\begin{equation}\label{eq:equation-first}
 \sumall F^i h_{ii\ell}=f_\ell
      =d_Xf(e_\ell)+\la_\ell d_\nu f(e_\ell),
 \qquad 1\le\ell\le n.
\end{equation}
Therefore, \eqref{eq:u-diagonal} gives
\begin{equation}\label{eq:u-contracted}
 \sumall F^i u_{ii}
   =\sum_{\ell=1}^n f_\ell\ip{X}{e_\ell}
        +kf-u\sumall F^i\la_i^2.
\end{equation}
Substituting \eqref{eq:commuted-contracted} and \eqref{eq:u-contracted} into
\eqref{eq:max-second}, we find
\begin{equation}\label{eq:before-f-second}
 \begin{aligned}
 0\ge{}&\frac{1}{\la_1}\sumall F^i h_{ii11}
    +\frac{2}{\la_1}\sumall\sum_{p>m}\frac{F^i h_{1pi}^2}{{\la_1}-\la_p}
    -\frac1{{\la_1}^2}\sumall F^i h_{11i}^2\\
 &+(N-1)\sumall F^i\la_i^2+\frac N{u^2}\sumall F^i u_i^2
    +kf{\la_1}-\frac{Nkf}{u}
    -\frac Nu\sum_{\ell=1}^n f_\ell\ip{X}{e_\ell}.
 \end{aligned}
\end{equation}

Differentiating \eqref{eq:curvature-equation} twice in the $e_1$ direction gives
\begin{equation}\label{eq:equation-second}
 \sumall F^i h_{ii11}
    +\sum_{p,q,r,s=1}^n F^{pq,rs}h_{pq1}h_{rs1}=f_{11}.
\end{equation}
For clarity, all derivatives of $f$ in its two arguments below are taken with
respect to the product connection on $\R^{n+1}\times\Sn$; in particular,
$d_{\nu\nu}^2f$ is the covariant Hessian on the sphere. The chain rule gives
\begin{equation}\label{eq:f-chain-rule}
 \begin{aligned}
 f_{11}={}&\sum_{\ell=1}^n h_{11\ell}d_\nu f(e_\ell)
       +d_{XX}^2f(e_1,e_1)+2{\la_1}\,d_{X\nu}^2f(e_1,e_1)\\
       &+{\la_1}^2d_{\nu\nu}^2f(e_1,e_1)-{\la_1}\,d_Xf(\nu).
 \end{aligned}
\end{equation}
Indeed, along a geodesic through $x_0$ with initial tangent $e_1$,
$D_1X_1=-{\la_1}\nu$, $D_1\nu={\la_1}e_1$, and
$\nabla^{\Sn}_1(D_1\nu)=\sum_\ell h_{11\ell}e_\ell$.
Since ${\la_1}\ge1$, \eqref{eq:f-chain-rule} implies
\begin{equation}\label{eq:f11-bound}
 \frac{f_{11}}{\la_1}
    \ge\sum_{\ell=1}^n\frac{h_{11\ell}}{\la_1} d_\nu f(e_\ell)-C{\la_1}.
\end{equation}
Here and below, $C$ denotes a positive constant depending only on
$n,k,R,u_*,M_1,M_2,L$, which may change from line to line and is independent of
$N$.

The terms involving $d_\nu f$ cancel by the critical equation:
\begin{equation}\label{eq:drift-cancellation}
 \begin{aligned}
 &\sum_{\ell=1}^n\frac{h_{11\ell}}{\la_1} d_\nu f(e_\ell)
       -\frac Nu\sum_{\ell=1}^n f_\ell\ip{X}{e_\ell}\\
 &\quad=\frac Nu\sum_{\ell=1}^n\la_\ell\ip{X}{e_\ell}d_\nu f(e_\ell)
   -\frac Nu\sum_{\ell=1}^n
       \bigl(d_Xf(e_\ell)+\la_\ell d_\nu f(e_\ell)\bigr)\ip{X}{e_\ell}\\
 &\quad=-\frac Nu\sum_{\ell=1}^n d_Xf(e_\ell)\ip{X}{e_\ell}
       \ge-CN.
 \end{aligned}
\end{equation}
Combining \eqref{eq:before-f-second}--\eqref{eq:drift-cancellation}, and using
$kf{\la_1}\ge0$ and $Nkf/u\le CN$, we arrive at
\begin{equation}\tag{$\clubsuit 1$}\label{eq:club1}
 \begin{aligned}
 0\ge{}&-\frac{1}{\la_1}\sum_{p,q,r,s=1}^n F^{pq,rs}h_{pq1}h_{rs1}
       +\frac{2}{\la_1}\sumall\sum_{p>m}\frac{F^i h_{1pi}^2}{{\la_1}-\la_p}
       -\frac1{{\la_1}^2}\sumall F^i h_{11i}^2\\
       &+(N-1)\sumall F^i\la_i^2
         +\frac N{u^2}\sumall F^i u_i^2-C{\la_1}-CN.
 \end{aligned}
\end{equation}

We next combine the positive third-order terms with the negative terms
$-F^i h_{11i}^2/{\la_1}^2$. By \eqref{eq:F-contraction},
\begin{equation}\label{eq:split-Hessian}
 -\sum_{p,q,r,s=1}^n F^{pq,rs}h_{pq1}h_{rs1}
 =-\sum_{i\ne j}\B h_{ii1}h_{jj1}
    +\sum_{i\ne j}\B h_{ij1}^2.
\end{equation}
Using \eqref{eq:cone}, Codazzi, and \eqref{eq:F-identities}, we obtain
\begin{equation}\label{eq:off-diagonal-good}
 \begin{aligned}
 \frac{1}{\la_1}\sum_{i\ne j}\B h_{ij1}^2
 &\ge\frac{2}{\la_1}\sum_{i>m}\sigma_{k-2}(\la\mid1i)h_{11i}^2\\
 &=\frac{2}{\la_1}\sum_{i>m}\frac{F^i-F^1}{{\la_1}-\la_i}h_{11i}^2.
 \end{aligned}
\end{equation}
On the other hand, retaining the distinct terms with $i=p$ and $i=1$ in the
following double sum yields
\begin{equation}\label{eq:eigenvalue-good}
 \begin{aligned}
 \frac{2}{\la_1}\sumall\sum_{p>m}\frac{F^i h_{1pi}^2}{{\la_1}-\la_p}
 &\ge\frac{2}{\la_1}\sum_{p>m}\frac{F^p h_{1pp}^2}{{\la_1}-\la_p}
       +\frac{2}{\la_1}\sum_{p>m}\frac{F^1 h_{1p1}^2}{{\la_1}-\la_p}\\
 &=\frac{2}{\la_1}\sum_{i>m}\frac{F^i h_{ii1}^2}{{\la_1}-\la_i}
       +\frac{2}{\la_1}\sum_{i>m}\frac{F^1 h_{11i}^2}{{\la_1}-\la_i}.
 \end{aligned}
\end{equation}
Furthermore, \eqref{eq:contact-first} and Codazzi imply
\begin{equation}\label{eq:top-block-vanishing}
 h_{11i}=h_{1i1}=0,\qquad 2\le i\le m.
\end{equation}
Adding \eqref{eq:off-diagonal-good} and \eqref{eq:eigenvalue-good}, and then
subtracting ${\la_1}^{-2}\sum_iF^ih_{11i}^2$, gives
\begin{equation}\label{eq:combined-good}
 \begin{aligned}
 &\frac{1}{\la_1}\sum_{i\ne j}\B h_{ij1}^2
   +\frac{2}{\la_1}\sumall\sum_{p>m}\frac{F^i h_{1pi}^2}{{\la_1}-\la_p}
   -\frac1{{\la_1}^2}\sumall F^i h_{11i}^2\\
 &\quad\ge\frac{2}{\la_1}\sum_{i>m}\frac{F^i h_{ii1}^2}{{\la_1}-\la_i}
        -\frac{F^1h_{111}^2}{{\la_1}^2}
        +\sum_{i>m}\frac{F^i({\la_1}+\la_i)}{{\la_1}^2({\la_1}-\la_i)}h_{11i}^2.
 \end{aligned}
\end{equation}
Here the last coefficient follows from the exact identity
\[
 \frac{2F^i}{{\la_1}({\la_1}-\la_i)}-\frac{F^i}{{\la_1}^2}
       =\frac{F^i({\la_1}+\la_i)}{{\la_1}^2({\la_1}-\la_i)}.
\]
Its sign is positive by \eqref{eq:positive-sum}. Thus, substituting
\eqref{eq:split-Hessian} and \eqref{eq:combined-good} into \eqref{eq:club1},
and discarding this nonnegative sum together with
$Nu^{-2}\sum_iF^iu_i^2$, we conclude that
\begin{equation}\tag{$\clubsuit 2$}\label{eq:club2}
 \boxed{\begin{aligned}
 0\ge{}&-\frac{1}{\la_1}\sum_{i\ne j}
       \sigma_{k-2}(\la\mid ij)h_{ii1}h_{jj1}
       +\frac{2}{\la_1}\sum_{i>m}
          \frac{\sigma_{k-1}(\la\mid i)}{\la_1-\la_i}h_{ii1}^2\\
       &-\sigma_{k-1}(\la\mid1)\left(\frac{h_{111}}{\la_1}\right)^2
       +(N-1)\sumall\sigma_{k-1}(\la\mid i)\la_i^2
       -C\la_1-CN.
 \end{aligned}}
\end{equation}

To express the remaining third-order terms in quadratic-form notation, set
$x_i=h_{ii1}$ and define
\begin{equation}\label{eq:L-definition}
 L_\la(x)=-\sum_{1\le i<j\le n}\B x_ix_j
       +\sum_{i>m}\frac{F^i}{{\la_1}-\la_i}x_i^2
       -\frac{F^1}{2{\la_1}}x_1^2.
\end{equation}
Then \eqref{eq:club2} is precisely
\begin{equation}\label{eq:L-reduction}
 0\ge\frac{2}{\la_1}L_\la(x)+(N-1)\sumall F^i\la_i^2-C{\la_1}-CN.
\end{equation}
The vector $x$ satisfies
\begin{equation}\label{eq:affine-constraint}
 x_1=\cdots=x_m,
 \qquad
 \sumall F^i x_i=f_1=d_Xf(e_1)+{\la_1}\,d_\nu f(e_1)=C_1{\la_1},
 \qquad |C_1|\le C,
\end{equation}
where $C_1=d_\nu f(e_1)+{\la_1}^{-1}d_Xf(e_1)$ is a bounded scalar at $x_0$.
Thus the remaining step in the curvature estimate is to control
$L_\la$ under \eqref{eq:affine-constraint}.

The next two sections treat the case $m=1$, with the small-$f$ regime first.
The repeated-eigenvalue case and the completion of the proof are given in
Section~\ref{sec:completion}.

\section{The regime \texorpdfstring{$\sigma_k\leq\varepsilon_I\lambda_1F^1$}{sigma-k <= epsilon-I lambda-1 F-1}}
\label{sm:section}

We first treat the regime in which $\sigma_k$ is small relative to
$\lambda_1F^1$. The argument takes place directly in the interior of
$\Gamma_k$ and uses a fractional-linear transformation followed by
logarithmic concavity. In this section the largest coordinate is assumed
to be simple; the case of a multiple largest coordinate is addressed
below.

\subsection{Notation and the interior estimate}

Throughout this section, assume
\begin{equation}\label{sm:assumptions}
 k\geq3,\qquad k<n\leq2k-1,\qquad
 \lambda_1>\lambda_2\geq\cdots\geq\lambda_n,
 \qquad\lambda\in\Gamma_k.
\end{equation}
Set
\begin{equation}\label{sm:notation}
 f=\sigma_k(\lambda),\qquad
 F^i=\sigma_{k-1}(\lambda\mid i),\qquad
 H=D^2\sigma_k(\lambda),\qquad
 s=\sum_{i=1}^n F^i x_i.
\end{equation}
Here $x\in\mathbb R^n$ is an arbitrary vector, and $H$ is the Hessian
of $\sigma_k$ as a function of the eigenvalue variables. Thus $H$ is
distinct from the second fundamental form. We use deletion positivity
on $\Gamma_k$:
\[
 \sigma_{k-r}(\lambda\mid I)>0
 \quad\text{when }|I|=r,\quad 1\leq r\leq k.
\]
The auxiliary polynomial and convexity results used in the argument are
proved in Appendix~\ref{poly:section}.

Recall the quadratic form
\begin{equation}\label{sm:quadratic}
 L_\lambda(x)
 =-\sum_{i<j}\sigma_{k-2}(\lambda\mid ij)x_ix_j
  +\sum_{i=2}^n\frac{F^i}{\lambda_1-\lambda_i}x_i^2
  -\frac{F^1}{2\lambda_1}x_1^2.
\end{equation}
Its first term is $-\tfrac12x^{\mathsf T}Hx$. The ordering and
$\sigma_1>0$ imply
\begin{equation}\label{sm:coordinate-bounds}
 \lambda_1>0,\qquad
 -(n-1)\lambda_1<\lambda_i\leq\lambda_1,
 \qquad
 F^i-F^1=(\lambda_1-\lambda_i)
 \sigma_{k-2}(\lambda\mid1i)\geq0\quad(i\geq2).
\end{equation}
Define the constants
\begin{equation}\label{sm:constants}
 \Delta=2k-n,\qquad
 \varepsilon_I=\frac{\Delta}{2k+n},\qquad
 \theta=\frac{\Delta}{4kn^2},\qquad
 \mathfrak h=k(n-k).
\end{equation}
They are positive and depend only on $n$ and $k$.

\begin{proposition}[Direct interior estimate]\label{sm:main}
Under \eqref{sm:assumptions}, suppose
\begin{equation}\label{sm:regime}
 0<f\leq E_I(\lambda),\qquad
 E_I(\lambda)=\varepsilon_I\lambda_1F^1.
\end{equation}
Then, for every $x\in\mathbb R^n$,
\begin{equation}\label{sm:main-bound}
 L_\lambda(x)\geq
 -\left(\frac12+\frac{2k^2n^2(n-k)}{2k-n}\right)
 \frac{s^2}{\lambda_1F^1}.
\end{equation}
Consequently, if $\lambda_1F^1\geq c_0>0$ and
$s=C_1\lambda_1$, then
\begin{equation}\label{sm:affine-consequence}
 L_\lambda(x)\geq
 -\frac{C_1^2}{c_0}
 \left(\frac12+\frac{2k^2n^2(n-k)}{2k-n}\right)\lambda_1^2.
\end{equation}
The simpler condition $0<f\leq\varepsilon_Ic_0$, together with
$\lambda_1F^1\geq c_0$, also suffices.
\end{proposition}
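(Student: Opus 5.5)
The proof has two parts: an inequality for $L_\lambda$ on the hyperplane $s=0$ obtained from log-concavity, and a reduction of general $x$ to that hyperplane. The consequence \eqref{sm:affine-consequence} is then immediate. If $s=C_1\lambda_1$, then $s^2/(\lambda_1F^1)=C_1^2\lambda_1^2/(\lambda_1F^1)\le C_1^2\lambda_1^2/c_0$. The condition $f\le\varepsilon_Ic_0\le\varepsilon_I\lambda_1F^1$ implies \eqref{sm:regime}, which gives the last sentence.

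\textbf{The transformation.} Following the introduction, I will pick coordinatewise fractional-linear maps
\[
 \mu_1=\frac{\gamma\lambda_1 y_1}{\lambda_1+\gamma y_1},\qquad
 \mu_i=\lambda_i+\frac{\beta(\lambda_1-\lambda_i)y_i}{\lambda_1-\lambda_i+\beta y_i}\ (i\ge2).
\]
The parameters are tuned so that the second derivative of $\sigma_k$ along $y\mapsto\lambda+\mu(y)$ at $y=0$, in direction $x$, reproduces $x^{\mathsf T}Hx$ together with diagonal terms $-2F^i x_i^2/(\lambda_1-\lambda_i)$ and $+F^1x_1^2/\lambda_1$, up to the factors $\beta$ and $\gamma$. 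Here $\gamma>1$ is used for the largest coordinate and $\beta<1$ for the others. In other words, I aim for
\[
 -\tfrac12\frac{d^2}{dt^2}\Big|_{t=0}\sigma_k(\lambda+\mu(tx))=L_\lambda(x)-R(x),
\]
where $R(x)$ collects the strictly positive remainders $(1-\beta)F^ix_i^2/(\lambda_1-\lambda_i)$ and $(\gamma-1)F^1x_1^2/(2\lambda_1)$. I then clear denominators and homogenize in an extra variable, producing a polynomial $P(y,t)$ whose hyperbolicity and cone positivity come from the Appendix. This is the step where $n<2k$ must enter, to make $\gamma>1$ and $\beta<1$ compatible with $P>0$ at the base point. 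Gårding's theorem gives concavity of $\log P$, so $D^2P\le (DP)^2/P$. Along directions with $DP[x]=0$, which after unwinding the denominators is exactly $s=0$, this yields $D^2P[x,x]\le0$. Translating back, the denominator errors are of the form $(\text{const})\,x_i^2F^i/(\lambda_1-\lambda_i)$, and these are absorbed by $R$. I expect the result
\[
 L_\lambda(x)\ge0\quad\text{whenever } s=0,
\]
with margin $\theta$-type positive multiples of $\sum_{i\ge2}F^ix_i^2/(\lambda_1-\lambda_i)+F^1x_1^2/\lambda_1$.

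\textbf{Removing the constraint.} For general $x$, I will write $x=z+te_1$ with $t=s/F^1$, so that $z$ has $s(z)=0$. Expanding,
\[
 L(x)=L(z)+t\,B(z,e_1)+t^2L(e_1),\qquad L(e_1)=-F^1/(2\lambda_1).
\]
Since $t^2F^1/(2\lambda_1)=\tfrac12\,s^2/(\lambda_1F^1)$, the $t^2$ term accounts for the $\tfrac12$ in \eqref{sm:main-bound}. The cross term is
\[
 -t\sum_{j\ge2}\sigma_{k-2}(\lambda\mid1j)z_j-tF^1z_1/\lambda_1 .
\]
I bound it by Cauchy--Schwarz against the coercive margin for $L(z)$. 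This requires
\[
 \sigma_{k-2}(\lambda\mid1j)^2(\lambda_1-\lambda_j)/F^j\lesssim F^1/\lambda_1 .
\]
By \eqref{sm:coordinate-bounds}, the left side is $\sigma_{k-2}(\lambda\mid1j)(F^j-F^1)/F^j\le\sigma_{k-2}(\lambda\mid1j)$. The regime $f\le\varepsilon_I\lambda_1F^1$ should give $\lambda_1\sigma_{k-2}(\lambda\mid1j)\le C F^1$, using $F^1=\lambda_j\sigma_{k-2}(\lambda\mid1j)+\sigma_{k-1}(\lambda\mid1j)$ together with $\lambda_1+\lambda_j\ge\lambda_1/k$. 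This produces the constant $2k^2n^2(n-k)/(2k-n)$, with the factor $1/\Delta$ coming from the margin $\theta$.

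\textbf{Main obstacle.} The hardest step is verifying that the homogenized polynomial $P$ stays in its hyperbolicity cone at the base point with explicit margin. This is where the conditions $f\le\varepsilon_I\lambda_1F^1$ and $n\le 2k-1$ are consumed. I would first try $\gamma=1+\theta$ and $\beta=1-\theta$, and check positivity via deletion positivity and Newton--Maclaurin.
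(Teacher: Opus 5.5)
There is a genuine gap. Your architecture matches the paper's: a coercive estimate on $\{s=0\}$ obtained from coordinatewise fractional-linear maps and log-concavity, followed by the shift $x=z+(s/F^1)e_1$. However, both load-bearing steps fail as written.

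\textbf{The cross term.} You need $\sigma_{k-2}(\lambda\mid1j)^2(\lambda_1-\lambda_j)/F^j\le CF^1/\lambda_1$, and you propose $\lambda_1\sigma_{k-2}(\lambda\mid1j)\le CF^1$. Both are false in the regime. Take $k=3$, $n=4$, and $\lambda=(a,b,b,-1)$ with $b>2$ defined by $ab(b-2)-b^2=\varepsilon_I$. Then $\lambda\in\Gamma_3$, $f=\varepsilon_I$, and $\lambda_1F^1=b^2+f$, so $f\le\varepsilon_I\lambda_1F^1$ holds and $F^1\approx4/a$. On the other hand, $\sigma_1(\lambda\mid12)=b-1\approx1$, $F^2=ab-a-b\approx a$ and $\lambda_1-\lambda_2\approx a$. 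So your left side is $\approx1$, while $F^1/\lambda_1\approx4/a^2$. Estimating the two pieces of the cross term separately therefore loses a factor of order $\lambda_1/F^1$, which destroys the $O(\lambda_1^2)$ bound needed in the maximum principle.

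The missing idea is to use $s(z)=0$ \emph{before} estimating. Substitute $F^1z_1=-\sum_{j\ge2}F^jz_j$ and $F^j=\lambda_1\sigma_{k-2}(\lambda\mid1j)+\sigma_{k-1}(\lambda\mid1j)$. The large parts then cancel, and the cross term equals exactly $\frac{s}{\lambda_1F^1}\sum_{j\ge2}\sigma_{k-1}(\lambda\mid1j)z_j$. What you then need is the mixed-coefficient bound $\sigma_{k-1}(\lambda\mid1j)^2\le\frac{k(n-k)}{n-1}F^1F^j$. It follows from the identity $F^1F^j=\sigma_{k-1}(\lambda\mid1j)^2-\sigma_{k-2}(\lambda\mid1j)\sigma_k(\lambda\mid1j)+f\sigma_{k-2}(\lambda\mid1j)$ together with Newton's inequality for $\lambda\mid1j$. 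Cauchy--Schwarz against the margin $\frac{\theta}{2\lambda_1}\sum_iF^iz_i^2$ then gives exactly the constant $\frac12+\frac{k(n-k)}{2\theta}$.

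\textbf{The transformation.} Your maps are rescalings $y\mapsto\gamma y$, $y\mapsto\beta y$ of maps that do not depend on $\gamma,\beta$. Since $\mu''/(\mu')^2$ is invariant under rescaling, no remainder $R$ appears. Concretely, you get $\mu_i''/(\mu_i')^2=-2/(\lambda_1-\lambda_i)$ exactly for $i\ge2$, and $\mu_1''/(\mu_1')^2=-2/\lambda_1$. What you need are ratios strictly greater than $-2/(\lambda_1-\lambda_i)$ and $1/\lambda_1$, respectively, so the first coordinate even has the wrong sign. For a M\"obius map, $m''/(m')^2=2/(m-m(\infty))$, so the margin must come from the position of $m(\infty)$. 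The paper takes $m_1(z)=az/(a-\gamma z)$ and $m_i(z)=az/(a+\beta z)$, so that $m_1(\infty)=-a/\gamma$ and $m_i(\infty)=a/\beta$. This gives ratios $2\gamma/(a(1+\gamma))$ and $-2\beta/(a-\beta\lambda_i)$.

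Moreover, $DP[v]=0$ is not the condition $s=0$, because $P$ contains the cleared denominator $D$. The correct step is $D^2\log P\le0$ along $v_i=y_i/m_i'(q_i)$ with $\sum_iF^iy_i=0$; in that direction $D(\sigma_k\circ m)[v]=0$. This leaves the error $f\sum_i\eta_i^2$ coming from $-D^2\log D$, with $\eta_i$ of size $y_i/\lambda_1$. The error is proportional to $f$, and absorbing it into margins of size $\theta F^i/\lambda_1$ is exactly where $f\le\varepsilon_I\lambda_1F^1$ is consumed.

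By contrast, membership of the base point in the hyperbolicity cone needs only $\lambda\in\Gamma_k$. The condition $n<2k$ enters through the compatibility of $\gamma>1>\beta$ with $\beta>\gamma(n-k)/k$, which is what makes the coefficients of the cleared polynomial positive.
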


\subsection{Mixed coefficients and the affine constraint}

For $i\geq2$, put
\[
 G_i=\sigma_{k-1}(\lambda\mid1i),\qquad
 U_i=\sigma_{k-2}(\lambda\mid1i),\qquad
 V_i=\sigma_k(\lambda\mid1i),\qquad
 \mathcal R_{n,k}=\frac{k(n-k)}{n-1}.
\]

\begin{lemma}[Mixed coefficients]\label{sm:mixed}
Under \eqref{sm:assumptions}, for each $i\geq2$,
\begin{equation}\label{sm:mixed-bound}
 G_i^2\leq\mathcal R_{n,k} F^1F^i.
\end{equation}
\end{lemma}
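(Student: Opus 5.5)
The plan is to reduce \eqref{sm:mixed-bound} to a single Newton inequality on the vector with the coordinates $1$ and $i$ deleted. Fix $i\ge2$. Let $\mu\in\R^{n-2}$ denote $\lambda$ with the entries $\lambda_1,\lambda_i$ removed, and put $a=\lambda_1$, $b=\lambda_i$. In this notation $G_i=\sigma_{k-1}(\mu)$, $U_i=\sigma_{k-2}(\mu)$ and $V_i=\sigma_k(\mu)$.

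\emph{Step 1 (algebraic expansion).} Expanding the deleted symmetric functions in the two removed variables gives
\[
 F^1=\sigma_{k-1}(\lambda\mid1)=G_i+bU_i,\qquad
 F^i=\sigma_{k-1}(\lambda\mid i)=G_i+aU_i,
\]
\[
 f=\sigma_k(\lambda)=V_i+(a+b)G_i+abU_i.
\]
Multiplying the first two identities and using the third, we get
\[
 F^1F^i=G_i^2+U_i\bigl((a+b)G_i+abU_i\bigr)=G_i^2+U_i\,(f-V_i).
\]

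\emph{Step 2 (use of the cone).} By deletion positivity, $U_i=\sigma_{k-2}(\lambda\mid1i)>0$. Also $f>0$ on $\Gamma_k$. Hence $F^1F^i\ge G_i^2-U_iV_i$, and it remains to bound $U_iV_i$ from above by a multiple of $G_i^2$. If $V_i\le0$ we even have $F^1F^i\ge G_i^2$, and we are done because $\mathcal R_{n,k}\ge1$. Indeed,
\[
k(n-k)-(n-1)=(k-1)(n-k-1)\ge0.
\]

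\emph{Step 3 (Newton's inequality on $\mu$).} The classical Newton inequalities hold for arbitrary real vectors, with no cone assumption. Apply them to $\mu\in\R^{N}$ with $N=n-2$ at level $k-1$:
\[
 \frac{\sigma_{k-2}(\mu)\sigma_k(\mu)}{\binom{N}{k-2}\binom{N}{k}}
 \le\frac{\sigma_{k-1}(\mu)^2}{\binom{N}{k-1}^2}.
\]
This simplifies to
\[
 U_iV_i\le\frac{(k-1)(N-k+1)}{k(N-k+2)}G_i^2=\frac{(k-1)(n-k-1)}{k(n-k)}G_i^2.
\]
If $k>N$, then $V_i=0$ and this case is already covered by Step~2. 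Combining with Step~2 yields
\[
 F^1F^i\ge\Bigl(1-\frac{(k-1)(n-k-1)}{k(n-k)}\Bigr)G_i^2=\frac{n-1}{k(n-k)}G_i^2=\frac{G_i^2}{\mathcal R_{n,k}},
\]
which is \eqref{sm:mixed-bound}.

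The only point needing care is the last step. One must check the binomial bookkeeping in Newton's inequality, and in particular its validity for vectors $\mu$ that need not lie in any G\aa rding cone. Degenerate cases must also be handled: $k=N+1$ gives $V_i=0$, and if $k-1>N$ then $G_i=0$ and the claim is trivial. The rest is purely algebraic. The argument uses only $f>0$ and $U_i>0$; the ordering and the restriction $n\le2k-1$ are not needed for this lemma.
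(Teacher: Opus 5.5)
Your proposal is correct and follows essentially the same route as the paper. Both use the two-variable expansion giving $F^1F^i=G_i^2-U_iV_i+fU_i$, drop $fU_i\ge0$, note that $V_i=0$ when $n=k+1$, and apply Newton's inequality to $\lambda\mid 1i$ with the same constant $\frac{(k-1)(n-k-1)}{k(n-k)}$. Your extra remarks are accurate but not needed: the separate $V_i\le0$ case, $\mathcal R_{n,k}\ge1$ in general, and the fact that neither the ordering nor $n\le 2k-1$ is used.
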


\begin{proof}
Expansion in the first and $i$th coordinates gives
\[
 f=V_i+(\lambda_1+\lambda_i)G_i+\lambda_1\lambda_iU_i,
 \qquad
 F^1=G_i+\lambda_iU_i,\qquad
 F^i=G_i+\lambda_1U_i.
\]
Consequently,
\begin{equation}\label{sm:mixed-identity}
 F^1F^i=G_i^2-U_iV_i+fU_i.
\end{equation}
When $n=k+1$, one has $V_i=0$ and $\mathcal R_{n,k}=1$, so the result follows
immediately. For $n\geq k+2$,
Newton's inequality applied to $\lambda\mid1i$ gives
\[
 U_iV_i\leq
 \frac{(k-1)(n-k-1)}{k(n-k)}G_i^2.
\]
Combining this with \eqref{sm:mixed-identity} and $fU_i\geq0$ yields
$F^1F^i\geq G_i^2/\mathcal R_{n,k}$.
\end{proof}

\subsection{The fractional-linear transformation}
\label{sm:fractional}

The change of variables is motivated by the two distinct parts of $L_\lambda$.
Since $H_{ii}=0$ and $H_{ij}=\sigma_{k-2}(\lambda\mid ij)$ for $i\ne j$,
we can write
\[
 L_\lambda(x)=-\frac12x^{\mathsf T}Hx-\frac{F^1}{2\lambda_1}x_1^2
 +\sum_{i=2}^n\frac{F^i}{\lambda_1-\lambda_i}x_i^2.
\]
Thus the first part comes from the Hessian of $\sigma_k$, while the second
is a prescribed diagonal quadratic form. Applying logarithmic concavity
directly to $\sigma_k$ controls the Hessian expression on $\sum_iF^iy_i=0$,
but does not by itself produce these diagonal coefficients. The idea is to
generate both parts in one second-derivative calculation by allowing each
eigenvalue coordinate to depend on its own new variable:
\[
 \lambda_i(z_i)=m_i(z_i),\qquad
 \mathcal F(z)=\sigma_k\bigl(m_1(z_1),\ldots,m_n(z_n)\bigr).
\]
Indeed, at a point $q$ with $m(q)=\lambda$ and $m_i'(q_i)\ne0$, take
$v_i=y_i/m_i'(q_i)$. The chain rule gives
\[
 D^2\mathcal F(q)[v,v]=y^{\mathsf T}Hy
 +\sum_{i=1}^nF^i\frac{m_i''(q_i)}{(m_i'(q_i))^2}y_i^2.
\]
The coordinatewise dependence makes the additional term diagonal. Choosing
the ratios $m_i''/(m_i')^2$ appropriately therefore allows the same calculation
to incorporate the diagonal part of $L_\lambda$. The fractional-linear maps
below implement this idea; the required logarithmic concavity will be
established for the polynomial obtained by clearing their denominators.

For the coordinate transformation in this subsection and its associated
calculation, write $a=\lambda_1$. Fix the eigenvalue vector and hold
$a$ constant during all differentiation. Choose
\begin{equation}\label{sm:parameters}
 \gamma=\frac{n}{2(n-k)}>1,\qquad
 \beta=\frac{2k+n}{4k}<1,
 \qquad \beta>\gamma\frac{n-k}{k}.
\end{equation}
Define
\begin{equation}\label{sm:maps}
 m_1(z_1)=\frac{az_1}{a-\gamma z_1},\qquad
 m_i(z_i)=\frac{az_i}{a+\beta z_i}\quad(i\geq2),
\end{equation}
and set
\begin{equation}\label{sm:q}
 q_1=\frac{a}{1+\gamma},\qquad
 q_i=\frac{a\lambda_i}{a-\beta\lambda_i}\quad(i\geq2).
\end{equation}
Here $z=(z_1,\ldots,z_n)$ is the new variable, and $q=(q_1,\ldots,q_n)$
is the fixed point in the $z$-space corresponding to the given eigenvalue
vector $\lambda$. Its coordinates are obtained by solving $m_i(q_i)=\lambda_i$.
Thus differentiation is performed in $z$ and then evaluated at $z=q$, with
$a$ held fixed.

Thus $m(q)=\lambda$, and all denominators in \eqref{sm:q} are
positive. Put
\[
 D(z)=(a-\gamma z_1)\prod_{i=2}^n(a+\beta z_i),\qquad
 T_j(z)=\sigma_j(z_2,\ldots,z_n).
\]

The strict choices $\gamma>1$ and $\beta<1$ provide a further essential
feature. If one instead took $\gamma=\beta=1$, the chain-rule calculation
would reproduce the diagonal part exactly: $-\frac12D^2\mathcal F(q)[v,v]=L_\lambda(y)$.
For the parameters in \eqref{sm:parameters}, the derivative ratios computed
below give the more useful identity
\[
\begin{aligned}
 L_\lambda(y)={}&-\frac12D^2\mathcal F(q)[v,v]
 +\frac{\gamma-1}{2(\gamma+1)}\frac{F^1}{a}y_1^2\\
 &+\sum_{i=2}^n\frac{a(1-\beta)F^i}{(a-\lambda_i)(a-\beta\lambda_i)}y_i^2,
 \qquad v_i=\frac{y_i}{m_i'(q_i)}.
\end{aligned}
\]
Every displayed residual coefficient is strictly positive. The first vanishes
when $\gamma=1$, and all the others vanish when $\beta=1$. These residual
coefficients are precisely the $A_i$ defined in \eqref{sm:good-coefficients}.
They are needed to absorb the $f$-dependent error arising from the denominator
factors in the logarithmic-concavity calculation and to obtain the quantitative
tangential estimate used in the completion of the affine estimate. The additional
inequality $\beta>\gamma(n-k)/k$ ensures positivity of the coefficients of the
cleared polynomial, as verified below.

\subsubsection{The polynomial and its hyperbolicity cone}

Clearing denominators gives a polynomial $\Phi$ satisfying
\begin{equation}\label{sm:clearing}
 D(z)\sigma_k(m(z))=a^k\Phi(z),
\end{equation}
where
\begin{equation}\label{sm:polynomial}
\begin{aligned}
 \Phi(z)
 ={}&\sum_{\ell=k}^{n-1}
 \binom\ell k a^{n-\ell}\beta^{\ell-k}T_\ell
 +z_1a^{n-k}T_{k-1}\\
 &+z_1\sum_{\ell=k}^{n-1}
 a^{n-1-\ell}\beta^{\ell-k}
 \left[\beta\binom\ell{k-1}-\gamma\binom\ell k\right]T_\ell.
\end{aligned}
\end{equation}
To verify this expression, expand first according to the occurrence of
$m_1$, and use
\[
 \prod_{i=2}^n(a+\beta z_i)\,
 \sigma_j(m_2,\ldots,m_n)
 =\sum_{\ell=j}^{n-1}
 \binom\ell j a^{n-1+j-\ell}\beta^{\ell-j}T_\ell.
\]
The binomial coefficient counts the chosen $j$ indices inside a support
of size $\ell$. All coefficients displayed in \eqref{sm:polynomial}
are positive, since
\[
 \frac{\binom\ell k}{\binom\ell{k-1}}
 =\frac{\ell-k+1}{k}
 \leq\frac{n-k}{k}<\frac\beta\gamma.
\]
The degree is $n$, with highest homogeneous part
\begin{equation}\label{sm:top-degree}
 \Phi^{[n]}(z)=c_{\gamma,\beta}\prod_{i=1}^n z_i,
 \qquad
 c_{\gamma,\beta}
 =\beta^{n-1-k}\binom{n-1}{k-1}
 \left(\beta-\gamma\frac{n-k}{k}\right)>0.
\end{equation}
Both types of maps in \eqref{sm:maps} preserve the upper half-plane:
\[
 \operatorname{Im}m_1(z)
 =\frac{a^2\operatorname{Im}z}{|a-\gamma z|^2}>0,
 \qquad
 \operatorname{Im}m_i(z)
 =\frac{a^2\operatorname{Im}z}{|a+\beta z|^2}>0.
\]
Lemma~\ref{poly:stability} and \eqref{sm:clearing} therefore imply
that $\Phi$ is real stable. Lemma~\ref{poly:log-concavity} requires a homogeneous
polynomial, whereas $\Phi$ is not homogeneous in $z$ with $a$ fixed.
We therefore homogenize $\Phi$. Its polynomial homogenization is
\[
\begin{aligned}
 \widehat\Phi(\tau,z)&=\tau^n\Phi(z/\tau)\\
 &=\sum_{\ell=k}^{n-1}\binom\ell k
 (\tau a)^{n-\ell}\beta^{\ell-k}T_\ell
 +z_1(\tau a)^{n-k}T_{k-1}\\
 &\quad+z_1\sum_{\ell=k}^{n-1}
 (\tau a)^{n-1-\ell}\beta^{\ell-k}
 \left[\beta\binom\ell{k-1}-\gamma\binom\ell k\right]T_\ell.
\end{aligned}
\]
Here $T_\ell=\sigma_\ell(z_2,\ldots,z_n)$. The explicit expression is obtained
from \eqref{sm:polynomial} by replacing $a$ with $\tau a$; it is homogeneous
of degree $n$ in $(\tau,z)$ and defines $\widehat\Phi$ also at $\tau=0$.

This polynomial is hyperbolic in $e=(0,1,\ldots,1)$. Indeed, for real $\tau_0\ne0$
and real $z$, stability and conjugation give real-rootedness of
$t\mapsto\widehat\Phi(\tau_0,z+t\mathbf1)$. The same real-rootedness follows
from the hyperbolicity of $\sigma_k$ in $\mathbf1=(1,\ldots,1)\in\mathbb R^n$,
together with Lemma~\ref{poly:cone} and the half-plane-preserving maps above.
More explicitly, for nonreal $t$ the coordinates of $(z+t\mathbf1)/\tau_0$
all have imaginary part $\operatorname{Im}t/\tau_0$, so they lie together
in the open upper or open lower half-plane. Real stability of $\Phi$ and
its real coefficients imply that $\tau_0^n\Phi((z+t\mathbf1)/\tau_0)\ne0$.
At $\tau_0=0$ this polynomial is $c_{\gamma,\beta}\prod_i(z_i+t)$.
In every case its leading coefficient is $c_{\gamma,\beta}>0$, and
$\widehat\Phi(e)=c_{\gamma,\beta}>0$.

Let $\mathcal C$ be the component of $\{\widehat\Phi>0\}$ containing
$e$. By Lemma~\ref{poly:cone}, it is an open convex cone, and every
point in $\mathcal C$ is a hyperbolicity direction. The positive
orthant is contained in $\mathcal C$. To see this, first connect $e$
to $(1,\mathbf1)$ along $(t,\mathbf1)$, $0\leq t\leq1$, where
positivity follows from the coefficients. Then use the connected
positive orthant, on which $\widehat\Phi>0$.

\subsubsection{The transformed point belongs to the cone}

We verify that $(1,q)\in\mathcal C$. Choose
$\lambda_2<b<a$, keep the first coordinate equal to $a$,
and replace each remaining coordinate by
\[
 \lambda_i(t)=(1-t)\lambda_i+tb,
 \qquad0\leq t\leq1,\quad i\geq2.
\]
Set $\delta_1(t)=0$ and $\delta_i(t)=t(b-\lambda_i)\geq0$.
Multiaffinity and deletion positivity at the given $\lambda$ imply
\[
 \sigma_k(\lambda(t))
 =f+\sum_{r=1}^k\sum_{|I|=r}
 \sigma_{k-r}(\lambda\mid I)\prod_{i\in I}\delta_i(t)
 \geq f>0.
\]
Let $q(t)=m^{-1}(\lambda(t))$, using \eqref{sm:q}. Every
$\lambda_i(t)<a$ for $i\geq2$, so the inverse maps are continuous
along the entire path. Moreover,
\[
 a-\gamma q_1(t)=\frac{a}{1+\gamma}>0,
 \qquad
 a+\beta q_i(t)=\frac{a^2}{a-\beta\lambda_i(t)}>0.
\]
Thus $D(q(t))>0$, and \eqref{sm:clearing} gives
$\widehat\Phi(1,q(t))>0$ along this path. At $t=1$, all coordinates
of $(1,q(1))$ are positive, so this endpoint belongs to $\mathcal C$.
The path lies in the same positive component, proving that
$(1,q)\in\mathcal C$.

\subsubsection{Derivative ratios of the coordinate maps}

Direct differentiation of \eqref{sm:maps}, evaluated at
\eqref{sm:q}, gives
\begin{equation}\label{sm:ratios}
 \frac{m_1''(q_1)}{(m_1'(q_1))^2}
 =\frac{2\gamma}{a(1+\gamma)},\qquad
 \frac{m_i''(q_i)}{(m_i'(q_i))^2}
 =-\frac{2\beta}{a-\beta\lambda_i}\quad(i\geq2).
\end{equation}
For example,
\[
\begin{aligned}
 m_1'(z)&=\frac{a^2}{(a-\gamma z)^2},
 &m_1''(z)&=\frac{2\gamma a^2}{(a-\gamma z)^3},\\
 m_i'(z)&=\frac{a^2}{(a+\beta z)^2},
 &m_i''(z)&=-\frac{2\beta a^2}{(a+\beta z)^3}.
\end{aligned}
\]
In particular, every $m_i'(q_i)$ is strictly positive.

\subsubsection{Logarithmic concavity and the tangential bound}

Since $(1,q)\in\mathcal C$, Lemma~\ref{poly:log-concavity} gives
\begin{equation}\label{sm:log-concavity}
 D^2\log\Phi(q)[v,v]
 =D^2\log\widehat\Phi(1,q)[(0,v),(0,v)]\leq0.
\end{equation}
The equality uses $\widehat\Phi(1,z)=\Phi(z)$, with the homogenizing variable
held fixed. The inequality holds for every real $v$. For $\sum_iF^iy_i=0$,
set $v_i=y_i/m_i'(q_i)$ and keep this direction fixed when differentiating.
For $\mathcal F=\sigma_k\circ m$, we have
\[
 \mathcal F(q)=f,\qquad
 D\mathcal F(q)[v]=\sum_iF^im_i'(q_i)v_i=\sum_iF^iy_i=0.
\]
Consequently, the first-derivative square in the logarithmic Hessian vanishes,
and the chain rule gives
\[
\begin{aligned}
 D^2\log\mathcal F(q)[v,v]
 &=\frac{D^2\mathcal F(q)[v,v]}{f}
 -\frac{\bigl(D\mathcal F(q)[v]\bigr)^2}{f^2}\\
 &=\frac1f\left(y^{\mathsf T}Hy
 +\sum_{i=1}^nF^i\frac{m_i''(q_i)}{(m_i'(q_i))^2}y_i^2\right).
\end{aligned}
\]
On a neighborhood of $q$, all factors of $D$ and $\mathcal F$ are positive, and
\[
 \log\Phi=\log D+\log\mathcal F-k\log a.
\]
Write
\begin{equation}\label{sm:error-coordinates}
 \eta_1=-\frac{\gamma y_1}{(1+\gamma)a},\qquad
 \eta_i=\frac{\beta y_i}{a-\beta\lambda_i}\quad(i\geq2).
\end{equation}
Using $v_i=y_i/m_i'(q_i)$ gives $\eta_1=-\gamma v_1/(a-\gamma q_1)$ and
$\eta_i=\beta v_i/(a+\beta q_i)$ for $i\geq2$. Since $\log D$ is a sum
of logarithms of one-variable linear factors,
\[
 D^2\log D(q)[v,v]
 =-\frac{\gamma^2v_1^2}{(a-\gamma q_1)^2}
 -\sum_{i=2}^n\frac{\beta^2v_i^2}{(a+\beta q_i)^2}
 =-\sum_{i=1}^n\eta_i^2.
\]
As $a$ is fixed, combining these identities with \eqref{sm:log-concavity}
yields $0\geq-\sum_i\eta_i^2+D^2\mathcal F(q)[v,v]/f$.
Multiplying by $f>0$ gives
\[
 y^{\mathsf T}Hy
 +\sum_{i=1}^nF^i\frac{m_i''(q_i)}{(m_i'(q_i))^2}y_i^2
 \leq f\sum_{i=1}^n\eta_i^2.
\]
Equivalently,
\begin{equation}\label{sm:good-error}
 L_\lambda(y)\geq A_1y_1^2+\sum_{i=2}^n A_iy_i^2
 -\frac f2\sum_{i=1}^n\eta_i^2,
\end{equation}
where
\begin{equation}\label{sm:good-coefficients}
 A_1=\frac{\gamma-1}{2(\gamma+1)}\frac{F^1}{a},
 \qquad
 A_i=\frac{a(1-\beta)F^i}{(a-\lambda_i)(a-\beta\lambda_i)}
 \quad(i\geq2).
\end{equation}
The first-coordinate error satisfies the exact identity
\[
 \frac f2\eta_1^2
 =\frac{f\gamma^2}{(\gamma^2-1)aF^1}A_1y_1^2.
\]
For $i\geq2$, use $F^i\geq F^1$ and
$\beta(a-\lambda_i)\leq a-\beta\lambda_i$ to obtain
\[
 \frac f2\eta_i^2
 \leq\frac{f\beta}{2a(1-\beta)F^1}A_iy_i^2.
\]
Our parameters satisfy
\begin{equation}\label{sm:epsilon-parameters}
 \varepsilon_I=\frac{1-\beta}{\beta}
 \leq\frac{\gamma^2-1}{2\gamma^2}.
\end{equation}
For completeness, the second inequality is equivalent to
$2n^2\leq(2k+n)(3n-2k)$, whose right side minus left side equals
\[
 n^2+4kn-4k^2=k^2+6k(n-k)+(n-k)^2>0.
\]
Condition \eqref{sm:regime} therefore makes each error no greater than
one half of its corresponding term in \eqref{sm:good-coefficients}.
It follows that
\[
 L_\lambda(y)\geq\frac12\sum_{i=1}^n A_i y_i^2.
\]
To estimate these coefficients uniformly, \eqref{sm:coordinate-bounds}
gives
\[
 0<a-\lambda_i<na,\qquad
 0<a-\beta\lambda_i<na\quad(i\geq2).
\]
Also,
\[
 \frac{1-\beta}{n^2}=\theta,\qquad
 \frac{\gamma-1}{2(\gamma+1)}
 =\frac{\Delta}{2(3n-2k)}\geq\theta.
\]
The last inequality is equivalent to $2kn^2\geq3n-2k$, which holds
for the stated $n$ and $k$. Consequently,
\[
 A_i\geq\frac\theta a F^i\quad(1\leq i\leq n).
\]
Returning to the notation $\lambda_1$, we have proved the tangential
estimate
\begin{equation}\label{sm:tangential}
 L_\lambda(y)\geq\frac{\theta}{2\lambda_1}
 \sum_{i=1}^nF^iy_i^2,
 \qquad \sum_{i=1}^nF^iy_i=0.
\end{equation}

\subsection{Completion of the affine estimate}

\begin{proof}[Proof of Proposition~\ref{sm:main}]
For arbitrary $x$, define
\begin{equation}\label{sm:shift-definitions}
 y=x-\frac{s}{F^1}e_1,\qquad
 Q(y)=\sum_{i=1}^n F^i y_i^2,\qquad
 T=\sum_{i=2}^n G_i y_i,
\end{equation}
where $e_1=(1,0,\ldots,0)$. In components,
\[
 (y_1,\ldots,y_n)=\left(x_1-\frac{s}{F^1},x_2,\ldots,x_n\right).
\]
Then $\sum_iF^iy_i=0$. Direct expansion of \eqref{sm:quadratic}, using
$F^i=\lambda_1U_i+G_i$, gives the exact identity
\begin{equation}\label{sm:shift-identity}
 L_\lambda(x)=L_\lambda(y)
 +\frac{sT}{\lambda_1F^1}
 -\frac{s^2}{2\lambda_1F^1}.
\end{equation}
By Lemma~\ref{sm:mixed} and weighted Cauchy's inequality,
\begin{equation}\label{sm:T-bound}
 T^2\leq
 \left(\sum_{i=2}^n\frac{G_i^2}{F^i}\right)
 \left(\sum_{i=2}^n F^i y_i^2\right)
 \leq\mathfrak h F^1Q(y).
\end{equation}

The tangential estimate \eqref{sm:tangential} implies
\[
 T^2\leq\frac{2\mathfrak h}{\theta}
 \lambda_1F^1L_\lambda(y).
\]
The exact shift identity \eqref{sm:shift-identity} and completion of
a square give
\begin{align*}
 L_\lambda(x)
 &\geq L_\lambda(y)
 -|s|\sqrt{\frac{2\mathfrak h}{\theta\lambda_1F^1}}
   \sqrt{L_\lambda(y)}
 -\frac{s^2}{2\lambda_1F^1}\\
 &\geq-\left(\frac12+\frac{\mathfrak h}{2\theta}\right)
 \frac{s^2}{\lambda_1F^1}\\
 &=-\left(\frac12+\frac{2k^2n^2(n-k)}{2k-n}\right)
 \frac{s^2}{\lambda_1F^1}.
\end{align*}
This proves \eqref{sm:main-bound}. Its stated consequences follow from
$\lambda_1F^1\geq c_0$ and $s=C_1\lambda_1$.
\end{proof}

\section{The complementary regime}\label{sec:large-f}

We now consider the alternative
\begin{equation}\label{lg:regime}
 f=\sigma_k(\lambda)\geq \varepsilon_I\lambda_1F^1,
 \qquad 0<M_1\leq f\leq M_2,
\end{equation}
where $\varepsilon_I$ is the constant fixed in \eqref{sm:constants}.
Throughout this section, $3\leq k<n\leq 2k-1$ and
\[
 \lambda\in\Gamma_k,
 \qquad \lambda_1>\lambda_2\geq\cdots\geq\lambda_n.
\]
Set $H=D^2\sigma_k(\lambda)$. Thus
\[
 H_{ii}=0,
 \qquad H_{ij}=\sigma_{k-2}(\lambda\mid ij)\quad(i\ne j).
\]
For $x\in\mathbb R^n$, we use the notation
\begin{equation}\label{lg:quadratic-form}
 \begin{split}
 L_\lambda(x)
 &=-\frac12H[x,x]
   +\sum_{i=2}^n\frac{F^i}{\lambda_1-\lambda_i}x_i^2
   -\frac{F^1}{2\lambda_1}x_1^2,
 \\
 s&=\sum_{i=1}^nF^ix_i.
 \end{split}
\end{equation}
The estimate in this regime follows from a spectral lower bound and a
concavity inequality of Zhang. We state the latter on the level set
$\sigma_k=1$ so that its constants remain uniform as $f$ varies.

\begin{lemma}[Zhang~\cite{Zhang}, Lemma~1.1]\label{lg:zhang}
Let $3\leq k<n$ and $A>0$. There exist constants $\delta>0$, $K>0$ and
$R_{\mathrm Z}>0$, depending only on $n,k,A$, with the following property. If
\[
 \mu\in\Gamma_k,\qquad \sigma_k(\mu)=1,\qquad
 \mu_1\geq\cdots\geq\mu_n>-A,\qquad \mu_1\geq R_{\mathrm Z},
\]
then, for every $\xi\in\mathbb R^n$,
\begin{equation}\label{lg:zhang-inequality}
 \begin{split}
 &-D^2\sigma_k(\mu)[\xi,\xi]
 +K\left(\sum_{i=1}^n\sigma_{k-1}(\mu\mid i)\xi_i\right)^2
 +2\sum_{i=2}^n
     \frac{\sigma_{k-1}(\mu\mid i)}{\mu_1+A+1}\xi_i^2
 \\
 &\hspace{35mm}\geq
 (1+\delta)\frac{\sigma_{k-1}(\mu\mid1)}{\mu_1}\xi_1^2.
 \end{split}
\end{equation}
\end{lemma}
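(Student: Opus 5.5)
This is Zhang's concavity inequality. The paper only cites it, but I would prove it by the standard "split off the largest eigenvalue" argument of Guan--Ren--Wang and Li--Ren--Wang, adapted to the lower bound $\mu_n>-A$. The plan is as follows.

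\emph{Step 1: expand in $\xi_1$.} Write $F^i=\sigma_{k-1}(\mu\mid i)$ and $s=\sum_iF^i\xi_i$. The left side of \eqref{lg:zhang-inequality} becomes
\[
-2\xi_1\sum_{j\ge2}\sigma_{k-2}(\mu\mid1j)\xi_j-\sum_{i\ne j,\ i,j\ge2}\sigma_{k-2}(\mu\mid ij)\xi_i\xi_j+Ks^2+2\sum_{i\ge2}\frac{F^i\xi_i^2}{\mu_1+A+1}.
\]
Since $\mu_n>-A$, we have $\mu_1-\mu_i<\mu_1+A$ for every $i\ge2$. The diagonal term therefore carries, for each $i\ge2$, essentially the weight $2F^i/(\mu_1-\mu_i)$, which is exactly the factor appearing in \eqref{eq:eigenvalue-good}. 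I would then use the identity $F^i-F^1=(\mu_1-\mu_i)\sigma_{k-2}(\mu\mid1i)$ from \eqref{eq:F-identities} to convert the mixed coefficients $\sigma_{k-2}(\mu\mid1j)$ into the quantities $F^j$ and $F^1$.

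\emph{Step 2: handle the block $i,j\ge2$ and substitute for $\xi_1$.} On the indices $i,j\ge2$ I would use the concavity of $\sigma_{k-1}(\mu\mid1)^{1/(k-1)}$ on $\Gamma_{k-1}$; this is legitimate because $\mu\mid1\in\Gamma_{k-1}$. It bounds the negative Hessian part from below by minus a multiple of $\bigl(\sum_{j\ge2}\sigma_{k-2}(\mu\mid1j)\xi_j\bigr)^2/\sigma_{k-1}(\mu\mid1)$. Next, the term $Ks^2$ lets me replace $\xi_1$ by $-\sum_{j\ge2}F^j\xi_j/F^1$ up to an error that $K$ absorbs. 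The inequality then reduces to a quadratic form in $(\xi_2,\dots,\xi_n)$, with one cross term $\xi_1\cdot\sum_{j\ge2}\sigma_{k-2}(\mu\mid1j)\xi_j$ to be controlled by Cauchy--Schwarz.

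\emph{Step 3: case split on $\mu_2$.} Fix a small $\epsilon=\epsilon(n,k,A)$.
\begin{itemize}
\item \textbf{Case $\mu_2\le\epsilon\mu_1$.} Here $F^j\approx\mu_1\sigma_{k-2}(\mu\mid1j)$ for $j\ge2$. Moreover $\mu_1F^1\ge\frac kn$ by \eqref{eq:cone-lower-bound}, while $\mu_1F^1\le C$ because $\sigma_k=1$ and the other eigenvalues are $O(\epsilon\mu_1)$ or bounded below. Completing the square in $\xi_1$ then leaves the coefficient $(1+\delta)F^1/\mu_1$ with room to spare, since the cross term costs at most $(1+O(\epsilon))F^1\xi_1^2/\mu_1$ against the diagonal terms.
\item \textbf{Case $\mu_2\ge\epsilon\mu_1$.} Here I would induct on the number $r$ of eigenvalues comparable to $\mu_1$. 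Choose $r$ with $\mu_r\ge\epsilon_r\mu_1$ and $\mu_{r+1}\le\epsilon_{r+1}\mu_r$, using a decreasing chain of thresholds $\epsilon_j$. If $r\ge k$, then $\sigma_k\ge c\mu_1^k$, which contradicts $\sigma_k=1$ once $\mu_1\ge R_{\mathrm Z}$. If $r<k$, then $F^1$ is much smaller than $F^j$ for $j>r$, and I would apply the concavity of $(\sigma_k/\sigma_{r})^{1/(k-r)}$ to gain the factor $1+\delta$ on $\xi_1^2$.
\end{itemize}

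\emph{Main obstacle.} I expect the hardest part to be the case $\mu_2\ge\epsilon\mu_1$ with $2\le r<k$. There the terms $\xi_1\xi_j$ for $j\le r$ are large, and $F^1$ and $F^j$ are of the same order. The first thing I would try is the Guan--Ren--Wang lemma: for $\mu_1$ large and $\sigma_k$ bounded, $\sum_{i\ne1}\sigma_{k-2}(\mu\mid1i)\ldots$ is controlled by $\sigma_{k-1}(\mu\mid1)/\mu_1$ times $(1+O(\mu_r/\mu_1))$. This should give the strict gain $\delta$ after choosing $R_{\mathrm Z}$ large and then $K$ large. All constants depend only on $n,k,A$ through the thresholds $\epsilon_j$.
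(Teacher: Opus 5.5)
The paper does not prove this lemma; it imports it from Zhang's inequality~(1.10). Your proposal is therefore an independent attempt, and as written it has a genuine gap. The substance of the lemma is a coefficient strictly larger than $1$ in front of $\sigma_{k-1}(\mu\mid1)\xi_1^2/\mu_1$, obtained with only the weak diagonal weights $2F^i/(\mu_1+A+1)$. In both of your cases this is asserted rather than derived, and in the case $\mu_2\ge\epsilon\mu_1$, $2\le r<k$, you explicitly leave it as something you ``would try''.

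Several supporting claims are also incorrect.
\begin{itemize}
\item In Step~1 the comparison runs the wrong way. From $\mu_1-\mu_i<\mu_1+A+1$ you get $2F^i/(\mu_1+A+1)<2F^i/(\mu_1-\mu_i)$. So the lemma only grants weights of size about $2F^i/\mu_1$. These are far below $2F^i/(\mu_1-\mu_i)$ precisely for the indices $i\le r$ with $\mu_i$ close to $\mu_1$, which is where the cross terms $\sigma_{k-2}(\mu\mid1i)\xi_1\xi_i$ are largest. Using the larger weights would only give the weaker inequality that the paper deduces \emph{from} the lemma in Proposition~\ref{lg:affine-estimate}.
\item In Step~2, the $(i,j\ge2)$ block of $-D^2\sigma_k(\mu)$ equals $-\mu_1D^2\sigma_{k-1}(\mu\mid1)-D^2\sigma_k(\mu\mid1)$. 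Concavity of $\sigma_{k-1}^{1/(k-1)}$ says nothing about the second piece, since in general $\mu\mid1\notin\Gamma_k$.
\item In Case~1, the bound $\mu_1F^1\le C$ is false under $\mu_n>-A$ alone. Take $k=3$, $n=5$, $0<a<A$, and $\mu=(\mu_1,b,a,0,-a+\tau)$ with $\tau=(a^2+\eta)/(a+b)$ and $\mu_1=(1+ab(a-\tau))/\eta$, where $b>a$ is large and $\eta>0$ is small. Then $\sigma_2(\mu\mid1)=\eta$, $\sigma_3(\mu\mid1)=-ab(a-\tau)$, $\sigma_3(\mu)=1$, $\mu\in\Gamma_3$, $\mu_5>-A$, and $\mu_2=b\le\epsilon\mu_1$. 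Yet $\mu_1F^1=1+ab(a-\tau)\approx1+a^2b$ is unbounded as $b\to\infty$.
\end{itemize}

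This example also shows why the concavity mechanism you propose for the hard case cannot by itself produce the required constant. Arguments via concavity of $\sigma_{k-1}(\mu\mid1)^{1/(k-1)}$ or $(\sigma_k/\sigma_r)^{1/(k-r)}$, as in the Guan--Ren--Wang lemma, bound the $\xi_1^2$ contribution below at the scale $\sigma_k\xi_1^2/\mu_1^2$, with a constant near $1$. But $\sigma_k=\mu_1F^1+\sigma_k(\mu\mid1)$, and for non-convex $\mu$ the term $\sigma_k(\mu\mid1)$ can be very negative. So the target $(1+\delta)F^1\xi_1^2/\mu_1$ can exceed that scale by an unbounded factor; in the example above the factor is roughly $a^2b$.

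The missing ingredient is an argument showing how four things combine to give a constant strictly larger than $1$, uniformly in $\mu$: the Hessian cross terms $-2\xi_1\sum_{j\ge2}\sigma_{k-2}(\mu\mid1j)\xi_j$, the term $Ks^2$, the weak diagonal weights, and the lower bound $\mu_n>-A$. Supplying it essentially means reproducing Zhang's argument; the paper avoids this by citing \cite{Zhang}.
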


This is the normalized form of \cite[inequality~(1.10)]{Zhang}; we use
it without repeating its proof. We also use deletion positivity and
the Newton--Maclaurin inequalities for elementary symmetric polynomials.

\begin{lemma}\label{lg:spectral-bound}
For every ordered $\lambda\in\Gamma_k$ as above,
\begin{equation}\label{lg:negative-eigenvalue}
 (-\lambda_n)_+^k\leq C^{\mathrm{spec}}_{n,k}\lambda_1F^1,
 \qquad
 C^{\mathrm{spec}}_{n,k}=(n-k)
 \frac{\binom{n-2}{k-1}^{k-2}}
      {\binom{n-2}{k-2}^{k-1}}.
\end{equation}
Consequently, under \eqref{lg:regime},
\begin{equation}\label{lg:B}
 \lambda_n\geq-B,
 \qquad
 B=\left(\frac{C^{\mathrm{spec}}_{n,k}M_2}{\varepsilon_I}\right)^{1/k}.
\end{equation}
\end{lemma}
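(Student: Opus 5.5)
The plan is to reduce the bound to Newton--Maclaurin applied to the doubly deleted vector $\nu:=\lambda\mid 1n\in\mathbb R^{n-2}$, combined with the mixed identity \eqref{sm:mixed-identity} for $i=n$. If $\lambda_n\ge0$ there is nothing to prove, so assume $t:=-\lambda_n>0$. In the notation of Lemma~\ref{sm:mixed} with $i=n$, write $G=\sigma_{k-1}(\nu)$, $U=\sigma_{k-2}(\nu)$ and $V=\sigma_k(\nu)$. By deletion positivity on $\Gamma_k$ we have $U>0$ and $G>0$, and also $F^1=G+\lambda_nU=G-tU>0$. This gives the first basic inequality
\[
 G>tU .
\]
In particular $\nu$ has $\sigma_j(\nu)>0$ for $j\le k-1$, so $\nu\in\Gamma_{k-1}\subset\mathbb R^{n-2}$.

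The first step is a lower bound for $G$ in terms of $t$ alone. Since $\nu\in\Gamma_{k-1}$, Maclaurin's inequality in $n-2$ variables gives
\[
 \Bigl(\frac{G}{\binom{n-2}{k-1}}\Bigr)^{1/(k-1)}\le\Bigl(\frac{U}{\binom{n-2}{k-2}}\Bigr)^{1/(k-2)},
 \qquad\text{hence}\qquad
 U\ge\binom{n-2}{k-2}\binom{n-2}{k-1}^{-\frac{k-2}{k-1}}G^{\frac{k-2}{k-1}}.
\]
Inserting this into $G>tU$ and raising to the power $k-1$ yields
\[
 G\ge\frac{\binom{n-2}{k-2}^{k-1}}{\binom{n-2}{k-1}^{k-2}}\,t^{k-1}.
\]
The prefactor here is exactly the reciprocal of the binomial factor in $C^{\mathrm{spec}}_{n,k}$, which suggests this is the intended route.

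The second step converts this into a bound on $\lambda_1F^1$ by means of \eqref{sm:mixed-identity} with $i=n$. Lemma~\ref{sm:mixed} gives $F^1F^n\ge G^2/\mathcal R_{n,k}$, so that
\[
 \lambda_1F^1\ge\frac{\lambda_1G^2}{\mathcal R_{n,k}F^n}.
\]
Using $F^n=G+\lambda_1U<G+\lambda_1G/t$ together with $t\le\lambda_1$ (this holds in our range by \eqref{eq:positive-sum}, or crudely by \eqref{sm:coordinate-bounds}), we get
\[
 F^n<\frac{G(t+\lambda_1)}{t}\le\frac{2\lambda_1G}{t},
 \qquad\text{and therefore}\qquad
 \lambda_1F^1\ge\frac{t\,G}{2\mathcal R_{n,k}}.
\]
Combining this with the first step gives $t^k\le C\lambda_1F^1$, where $C$ is $2\mathcal R_{n,k}$ times the binomial ratio. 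To match the precise constant $(n-k)$ I expect to sharpen the factor $2\mathcal R_{n,k}=2k(n-k)/(n-1)$. One option is to use the better bound $t\le\frac{n-k}{k}\lambda_1$ from \eqref{eq:cone}, which gives $F^n<\frac{n\,\lambda_1G}{k\,t}$ and hence a factor $\mathcal R_{n,k}\,n/k=\frac{n(n-k)}{n-1}$. This is still slightly larger than $n-k$, so to reach exactly $n-k$ I would keep the term $fU\ge0$ in \eqref{sm:mixed-identity}, or compare $F^1$ with $G$ directly, which I expect suffices. The main obstacle is this constant bookkeeping, not the structure of the argument; any constant depending only on $n$ and $k$ is enough for \eqref{lg:B}.

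Finally, \eqref{lg:B} follows at once: under \eqref{lg:regime} we have $\lambda_1F^1\le f/\varepsilon_I\le M_2/\varepsilon_I$. Hence
\[
 (-\lambda_n)_+^k\le C^{\mathrm{spec}}_{n,k}M_2/\varepsilon_I,
\]
which is $\lambda_n\ge-B$.
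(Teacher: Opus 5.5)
Your first step is correct and is equivalent to the paper's Maclaurin step. The paper bounds $\sigma_{k-2}(\lambda\mid1n)$ from below by a multiple of $t^{k-2}$ rather than bounding your $G$; the two are interchangeable via $G>tU$. The gap is in your second step: it does not give the constant $n-k$.

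\textbf{Where the constant is lost.} Lemma~\ref{sm:mixed} together with $F^n=G+\lambda_1U<G(\lambda_1+t)/t$ yields only
\[
 tG<\mathcal R_{n,k}\Bigl(1+\frac{t}{\lambda_1}\Bigr)\lambda_1F^1 .
\]
So in place of $n-k$ you get $2k(n-k)/(n-1)$ as written, or $n(n-k)/(n-1)$ using \eqref{eq:cone}. Both are strictly larger, since $n\le2k-1$. Reaching $n-k$ along this route would need $t\le\frac{n-k-1}{k}\lambda_1$, which is false in general. For $n=k+1$ it would force $t\le0$, even though there one has exactly $\lambda_1F^1=f+tG$.

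\textbf{Why the proposed repairs don't help.} Keeping $fU$ in \eqref{sm:mixed-identity} is circular. Substituting $f=\lambda_1F^1+V-tG$ and $F^n=G+\lambda_1U$ collapses that identity back to $F^1=G-tU$. ``Comparing $F^1$ with $G$ directly'' is not yet an argument. As it stands, you prove \eqref{lg:negative-eigenvalue}, and hence \eqref{lg:B}, only with a larger constant. That is harmless for Proposition~\ref{lg:affine-estimate}, but it is not the lemma as stated, which fixes $C^{\mathrm{spec}}_{n,k}$ and $B$.

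\textbf{The missing ingredient: an Euler-type bound on $G/U$.} Since $\lambda\mid1n\in\Gamma_{k-1}$, the derivatives $\sigma_{k-2}(\lambda\mid1ni)$ are positive, and every entry is at most $\lambda_1$. Hence
\[
 (k-1)G=\sum_i\lambda_i\,\sigma_{k-2}(\lambda\mid1ni)\le(n-k)\lambda_1U .
\]

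\textbf{How to finish.} Start from $0<f=\lambda_1F^1+V-tG$. Use Newton's inequality $V\le\vartheta G^2/U$ with $\vartheta=\frac{(k-1)(n-k-1)}{k(n-k)}$, and write $G^2/U=GF^1/U+tG$. This gives
\[
 0<\lambda_1F^1+\vartheta\frac{G}{U}F^1-(1-\vartheta)tG
 \le\frac{n-1}{k}\lambda_1F^1-\frac{n-1}{k(n-k)}\,tG ,
\]
that is, $tG<(n-k)\lambda_1F^1$. Combined with your bound
\[
 G>\binom{n-2}{k-2}^{k-1}\binom{n-2}{k-1}^{-(k-2)}t^{k-1},
\]
this yields exactly $t^k<C^{\mathrm{spec}}_{n,k}\lambda_1F^1$.

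This is the paper's argument. The paper additionally weakens $tG>t^2U$ before applying Maclaurin to $U$ instead of $G$, which leads to the same constant.
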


\begin{proof}
It suffices to consider $\lambda_n=-b<0$. Put
\[
 \begin{gathered}
 w=(\lambda_2,\ldots,\lambda_{n-1}),\qquad
 T=\sigma_{k-2}(w),\qquad U=\sigma_{k-1}(w),
 \\
 V=\sigma_k(w),\qquad p=F^1.
 \end{gathered}
\]
Deletion positivity gives $(w,-b)\in\Gamma_{k-1}$. The recurrence
\[
 \sigma_j(w)=\sigma_j(w,-b)+b\sigma_{j-1}(w)
\]
then shows that $w\in\Gamma_{k-1}$. In particular, $T>0$, and
\begin{equation}\label{lg:recurrence}
 p=U-bT>0,
 \qquad f=\lambda_1p+V-bU.
\end{equation}
By Maclaurin's inequality and $U>bT$,
\[
 bT<U\leq
 \binom{n-2}{k-1}
 \left(\frac{T}{\binom{n-2}{k-2}}\right)^{(k-1)/(k-2)},
\]
and hence
\begin{equation}\label{lg:T-lower-bound}
 T>d_{n,k}b^{k-2},
 \qquad
 d_{n,k}=
 \frac{\binom{n-2}{k-2}^{k-1}}
      {\binom{n-2}{k-1}^{k-2}}.
\end{equation}
Euler's identity, positivity of the first derivatives of $\sigma_{k-1}(w)$,
and $w_i\leq\lambda_1$ give
\begin{equation}\label{lg:Euler}
 (k-1)U
 =\sum_{i=1}^{n-2}w_i\sigma_{k-2}(w\mid i)
 \leq\lambda_1(n-k)T.
\end{equation}
Newton's inequality yields
\[
 V\leq\vartheta\frac{U^2}{T},
 \qquad
 \vartheta=\frac{(k-1)(n-k-1)}{k(n-k)},
 \qquad
 1-\vartheta=\frac{n-1}{k(n-k)}.
\]
When $n=k+1$, the same formula holds with $V=\vartheta=0$.
Since $U=p+bT$, \eqref{lg:recurrence} and \eqref{lg:Euler} imply
\begin{align*}
 0<f
 &\leq\lambda_1p+\vartheta\frac{U^2}{T}-bU
 =\lambda_1p+\vartheta\frac{U}{T}p-(1-\vartheta)bU
 \\
 &\leq\frac{n-1}{k}\lambda_1p
       -\frac{n-1}{k(n-k)}b^2T.
\end{align*}
It follows that $b^2T<(n-k)\lambda_1p$.
Combining this with \eqref{lg:T-lower-bound} proves
\eqref{lg:negative-eigenvalue}. Finally, \eqref{lg:regime} gives
\[
 \lambda_1F^1\leq\frac{f}{\varepsilon_I}
 \leq\frac{M_2}{\varepsilon_I},
\]
which yields \eqref{lg:B}.
\end{proof}

\begin{proposition}\label{lg:affine-estimate}\label{lg:main}
Assume \eqref{lg:regime}. There exist $\Lambda>0$, $K>0$ and $\delta>0$,
depending only on $n,k,M_1,M_2$, such that, whenever
$\lambda_1\geq\Lambda$,
\begin{equation}\label{lg:all-vectors}
 L_\lambda(x)
 \geq\frac{\delta}{2}\frac{F^1}{\lambda_1}x_1^2
      -\frac{K}{2f}\left(\sum_{i=1}^nF^ix_i\right)^2
 \qquad\text{for every }x\in\mathbb R^n.
\end{equation}
In particular, for each fixed $C_1\in\mathbb R$, the affine constraint
$s=C_1\lambda_1$ implies
\begin{equation}\label{lg:affine-conclusion}
 L_\lambda(x)\geq-C_2\lambda_1^2,
 \qquad C_2=1+\frac{KC_1^2}{2M_1},
\end{equation}
where $C_2$ depends only on $n,k,C_1,M_1,M_2$.
\end{proposition}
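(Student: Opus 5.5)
The plan is to normalize to the level set $\sigma_k=1$ and apply Lemma~\ref{lg:zhang} there. The regime \eqref{lg:regime} supplies the two things Zhang's hypotheses need: a uniform lower bound on the spectrum and a large top coordinate.

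\textbf{Normalization.} Set $t=f^{1/k}$ and $\mu=\lambda/t$. Then $\sigma_k(\mu)=1$ and $\mu$ is ordered in $\Gamma_k$. Since $M_1\le f\le M_2$, $t$ lies between fixed positive constants.

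By Lemma~\ref{lg:spectral-bound}, $\lambda_n\ge-B$, so $\mu_n\ge -B/M_1^{1/k}>-A$ with $A=B M_1^{-1/k}+1$, which depends only on $n,k,M_2,M_1$. Choosing $\Lambda\ge M_2^{1/k}R_{\mathrm Z}$ gives $\mu_1\ge R_{\mathrm Z}$ whenever $\lambda_1\ge\Lambda$.

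Under this scaling,
\[
D^2\sigma_k(\mu)=t^{2-k}H,\qquad \sigma_{k-1}(\mu\mid i)=t^{1-k}F^i .
\]
Apply \eqref{lg:zhang-inequality} to $\xi=x$ and multiply through by $t^{k-2}$. This yields
\[
-H[x,x]+\frac{K}{f}s^2+2\sum_{i\ge2}\frac{F^i}{\lambda_1+t(A+1)}x_i^2\ \ge\ (1+\delta)\frac{F^1}{\lambda_1}x_1^2 .
\]
The $K$ term converts because $t^{k-2}\,t^{2-2k}=t^{-k}=1/f$. The other two terms convert by the same powers of $t$, using $t\mu_1=\lambda_1$.

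\textbf{Comparison with $L_\lambda$.} Halving the display, we need
\[
\frac{F^i}{\lambda_1+t(A+1)}\ \le\ \frac{F^i}{\lambda_1-\lambda_i}\quad (i\ge2).
\]
This holds provided $\lambda_1-\lambda_i\le\lambda_1+t(A+1)$, i.e.\ $-\lambda_i\le t(A+1)$. That follows from $\lambda_i\ge\lambda_n\ge -B\ge -t(A+1)$, since $tA\ge B$ by the choice of $A$ and $t\ge M_1^{1/k}$.

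Therefore
\[
L_\lambda(x)\ \ge\ \frac{1+\delta}{2}\frac{F^1}{\lambda_1}x_1^2-\frac{K}{2f}s^2-\frac{F^1}{2\lambda_1}x_1^2 ,
\]
which is exactly \eqref{lg:all-vectors}. Here $K,\delta$ depend on $n,k,A$, hence on $n,k,M_1,M_2$.

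\textbf{The affine consequence.} Drop the nonnegative $x_1^2$ term and use $s=C_1\lambda_1$ and $f\ge M_1$:
\[
L_\lambda(x)\ \ge\ -\frac{KC_1^2}{2M_1}\lambda_1^2\ \ge\ -C_2\lambda_1^2 .
\]

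\textbf{Main obstacle.} The only delicate point is bookkeeping. The homogeneity exponents must match correctly, and the constant $A$ must be chosen so that Zhang's denominator $\mu_1+A+1$, after rescaling, dominates $\lambda_1-\lambda_i$ for every $i\ge2$. Both are settled by taking $A$ as large as above.
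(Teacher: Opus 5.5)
Your proposal is correct and follows essentially the same route as the paper. You normalize by $t=f^{1/k}$, use Lemma~\ref{lg:spectral-bound} to get $\mu_n>-A$ with $A=1+B/M_1^{1/k}$, apply Zhang's inequality with $\xi=x$, multiply by $t^{k-2}$, and replace the denominators $\lambda_1+(A+1)t$ by the smaller positive quantities $\lambda_1-\lambda_i$. Your scaling bookkeeping and your choice of $\Lambda$ match the paper's proof up to trivial differences (the paper takes $\Lambda=M_2^{1/k}\max\{R_{\mathrm Z},1\}$).
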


\begin{proof}
Let $B$ be as in \eqref{lg:B}, and set
\[
 t=f^{1/k},\qquad \mu=\lambda/t,
 \qquad A=1+\frac{B}{M_1^{1/k}}.
\]
Then $\sigma_k(\mu)=1$ and
\[
 \mu_n\geq-\frac{B}{t}\geq-\frac{B}{M_1^{1/k}}>-A.
\]
Let $R_{\mathrm Z},K,\delta$ be the constants in Lemma~\ref{lg:zhang} for this
fixed $A$. Taking
\[
 \Lambda=M_2^{1/k}\max\{R_{\mathrm Z},1\}
\]
ensures $\mu_1=\lambda_1/t\geq R_{\mathrm Z}$ whenever
$\lambda_1\geq\Lambda$.

Apply \eqref{lg:zhang-inequality} with $\xi=x$ and multiply by
$t^{k-2}$. Since
\[
 \sigma_{k-1}(\mu\mid i)=t^{-(k-1)}F^i,
 \qquad D^2\sigma_k(\mu)=t^{-(k-2)}H,
 \qquad t^k=f,
\]
we obtain
\begin{equation}\label{lg:scaled-inequality}
 -H[x,x]+\frac{K}{f}s^2
 +2\sum_{i=2}^n\frac{F^i}{\lambda_1+(A+1)t}x_i^2
 \geq(1+\delta)\frac{F^1}{\lambda_1}x_1^2.
\end{equation}
Moreover, $(A+1)t\geq B+2M_1^{1/k}>B$, whereas
\[
 0<\lambda_1-\lambda_i\leq\lambda_1+B
 <\lambda_1+(A+1)t\qquad(i\geq2).
\]
Since $F^i>0$, replacing the denominators in the positive sum in
\eqref{lg:scaled-inequality} by $\lambda_1-\lambda_i$ increases its
left-hand side. Subtracting $(F^1/\lambda_1)x_1^2$ and dividing by $2$
therefore proves \eqref{lg:all-vectors}. Finally, $f\geq M_1$ and
$s=C_1\lambda_1$ give
\[
 L_\lambda(x)\geq-\frac{KC_1^2}{2M_1}\lambda_1^2,
\]
which implies \eqref{lg:affine-conclusion}.
\end{proof}

\begin{corollary}[The convex case]\label{lg:convex}
Suppose instead that $\lambda_n\geq0$, while the ordering, cone
condition and $0<M_1\leq\sigma_k(\lambda)\leq M_2$ are retained.
Then the conclusions of Proposition~\ref{lg:affine-estimate} hold
without separately imposing
$f\geq\varepsilon_I\lambda_1F^1$.
\end{corollary}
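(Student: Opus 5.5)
The approach is to rerun the proof of Proposition~\ref{lg:affine-estimate} with Lemma~\ref{lg:spectral-bound} replaced by the trivial lower bound $\lambda_n\ge0$. The only place the regime condition $f\ge\varepsilon_I\lambda_1F^1$ entered that proof was through \eqref{lg:B}. There it produced the lower bound $\lambda_n\ge-B$, which in turn fixed the constant $A$ in Lemma~\ref{lg:zhang} and justified comparing denominators. In the convex case we may simply take $B=0$ and keep the rest of the argument.

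The steps are as follows.

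\textbf{Step 1 (rescaling).} Set $t=f^{1/k}$ and $\mu=\lambda/t$, so that $\sigma_k(\mu)=1$. Then $\mu_n\ge0>-A$ with $A=1$, a constant depending only on $n,k$. Lemma~\ref{lg:zhang} then provides $R_{\mathrm Z},K,\delta$ depending only on $n,k$.

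\textbf{Step 2 (threshold).} Put $\Lambda=M_2^{1/k}\max\{R_{\mathrm Z},1\}$. Since $t\le M_2^{1/k}$, the condition $\lambda_1\ge\Lambda$ gives $\mu_1\ge R_{\mathrm Z}$, so Lemma~\ref{lg:zhang} applies.

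\textbf{Step 3 (scaled inequality).} Apply \eqref{lg:zhang-inequality} with $\xi=x$ and multiply by $t^{k-2}$. This gives \eqref{lg:scaled-inequality} with denominator $\lambda_1+2t$.

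\textbf{Step 4 (denominator comparison).} For $i\ge2$ we have $0<\lambda_1-\lambda_i\le\lambda_1<\lambda_1+2t$, because $\lambda_i\ge0$, $t>0$, and the largest coordinate is simple. Since $F^i>0$, replacing $\lambda_1+2t$ by $\lambda_1-\lambda_i$ only increases the positive diagonal sum. Subtracting $(F^1/\lambda_1)x_1^2$ and halving then yields \eqref{lg:all-vectors}.

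\textbf{Step 5 (affine conclusion).} Use $f\ge M_1$ and $s=C_1\lambda_1$ to obtain \eqref{lg:affine-conclusion} exactly as before.

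There is no real obstacle, since every estimate carries over verbatim. The one point to check is that the constants now depend only on $n,k,M_1,M_2$, and that none of them came from $\varepsilon_I$ or from $C^{\mathrm{spec}}_{n,k}$. This holds because $A=1$ is fixed independently of the data and $B$ no longer appears.
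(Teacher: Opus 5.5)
Your argument is correct and is essentially the paper's second (``equivalently'') route: apply Lemma~\ref{lg:zhang} with $A=1$ to $\mu=\lambda/f^{1/k}$ and repeat the scaling argument, so Lemma~\ref{lg:spectral-bound} is never used. The paper leads with an even shorter observation: nonnegativity of all coordinates gives $f=\lambda_1F^1+\sigma_k(\lambda\mid1)\ge\lambda_1F^1>\varepsilon_I\lambda_1F^1$, so the regime hypothesis holds automatically and Proposition~\ref{lg:affine-estimate} applies verbatim.
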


\begin{proof}
All coordinates are nonnegative, so
\[
 f=\lambda_1F^1+\sigma_k(\lambda\mid1)
 \geq\lambda_1F^1>\varepsilon_I\lambda_1F^1,
\]
where $0<\varepsilon_I<1$ and $F^1>0$. Thus the required alternative
is automatic. Equivalently, one may apply Lemma~\ref{lg:zhang} directly
with $A=1$ to $\mu=\lambda/f^{1/k}$ and repeat the preceding scaling
argument; Lemma~\ref{lg:spectral-bound} is unnecessary in this case.
\end{proof}

\begin{remark}\label{lg:threshold-remark}
The role of the inequality
$f\geq\varepsilon_I\lambda_1F^1$ is precisely to supply the uniform
spectral lower bound needed in Lemma~\ref{lg:zhang}.
The same argument applies with $\varepsilon_I$ replaced by any fixed
$\varepsilon>0$, with the constants then also depending on
$\varepsilon$.
\end{remark}

\section{Repeated largest principal curvatures and completion of the estimate}
\label{sec:completion}

\subsection{The repeated-eigenvalue case}

We return to the maximum point $x_0$ of the geometric test function
\eqref{eq:test-function}. Suppose that the largest principal curvature has
multiplicity $m\ge2$. The upper-contact identities
\eqref{eq:contact-first}, together with Codazzi, give
\begin{equation}\label{eq:repeated-zero}
 h_{111}=h_{221}=h_{212}=0,
 \qquad h_{ii1}=h_{111}=0\quad(1\le i\le m).
\end{equation}
Here $h_{111}=h_{221}$ and $h_{212}=0$ follow from
\eqref{eq:contact-first}, whereas $h_{221}=h_{212}$ is Codazzi. Thus the
vector $x_i=h_{ii1}$ arising in \eqref{eq:L-reduction} satisfies
\[
 x_1=\cdots=x_m=0.
\]
This vanishing uses the smooth upper contact at the maximum point.

For completeness, the quadratic-form estimate needed under this condition
follows directly from the standard concavity of $\sigma_k^{1/k}$.

\begin{proposition}\label{prop:repeated-bound}
Let $\la\in\Gamma_k$ satisfy
$\la_1=\cdots=\la_m>\la_{m+1}\ge\cdots\ge\la_n$, with $2\le m\le n$,
where the comparison with $\la_{m+1}$ is omitted when $m=n$.
Let $L_\la$ be defined by \eqref{eq:L-definition}. If
$x_1=\cdots=x_m=0$ and $s=\sum_iF^ix_i$, then
\begin{equation}\label{eq:repeated-affine-bound}
 L_\la(x)\ge-\frac{k-1}{2kf}s^2,
 \qquad f=\sigma_k(\la).
\end{equation}
In particular, if $f\ge M_1>0$ and $s=C_1\la_1$ with $|C_1|\le B_1$, then
\begin{equation}\label{eq:repeated-final-bound}
 L_\la(x)\ge-\frac{k-1}{2kM_1}B_1^2\la_1^2.
\end{equation}
The same statement holds for $m=n$, with the usual empty-sum convention.
\end{proposition}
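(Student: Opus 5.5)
With $x_1=\cdots=x_m=0$, the term $-\frac{F^1}{2\la_1}x_1^2$ in $L_\la$ vanishes, and every Hessian cross term involving an index $i\le m$ drops out. So
\[
 L_\la(x)=-\frac12 H[x,x]+\sum_{i>m}\frac{F^i}{\la_1-\la_i}x_i^2,
 \qquad H=D^2\sigma_k(\la),
\]
where the diagonal sum is over indices with $\la_1-\la_i>0$. When $m=n$ the vector $x$ is zero, so both sides of \eqref{eq:repeated-affine-bound} vanish and nothing needs to be proved. In general the diagonal sum is nonnegative, because $F^i>0$ by \eqref{eq:cone}. We discard it.

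The remaining term is controlled by the concavity inequality \eqref{eq:sigma-concavity} with $\xi=x$:
\[
 H[x,x]\le\frac{k-1}{kf}\Bigl(\sumall F^ix_i\Bigr)^2=\frac{k-1}{kf}s^2 .
\]
Multiplying by $-\tfrac12$ gives $L_\la(x)\ge-\frac12H[x,x]\ge-\frac{k-1}{2kf}s^2$, which is \eqref{eq:repeated-affine-bound}. Note that \eqref{eq:sigma-concavity} is stated for all $\xi\in\R^n$ and needs no multiplicity assumption, so the repeated eigenvalue causes no trouble. It also uses $f>0$, which holds on $\Gamma_k$.

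For \eqref{eq:repeated-final-bound}, substitute $s=C_1\la_1$. Then $s^2\le B_1^2\la_1^2$ and $1/f\le1/M_1$, which gives the claim.

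There is no real obstacle here. The only care needed is the bookkeeping check that the $x_1^2$ term and all cross terms with indices in the top block really vanish under $x_1=\cdots=x_m=0$. The constraint matters for that reason: without it, $-\frac{F^1}{2\la_1}x_1^2$ would survive and the tangential machinery of Sections 3 and 4 would be needed.
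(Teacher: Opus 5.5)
Your proposal is correct and follows the paper's proof: apply the concavity inequality \eqref{eq:sigma-concavity} to $\xi=x$, note that the $-\frac{F^1}{2\la_1}x_1^2$ term vanishes because $x_1=0$, and discard the nonnegative diagonal sum over $i>m$. The extra remarks (cross terms with indices $i\le m$ dropping out, and $x=0$ when $m=n$) are true but not needed.
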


\begin{proof}
By \eqref{eq:sigma-concavity},
\[
 -\frac12D^2\sigma_k(\la)[x,x]
 \ge-\frac{k-1}{2kf}\left(\sumall F^ix_i\right)^2.
\]
Since $x_1=0$, the last term in \eqref{eq:L-definition} vanishes. Each term
in its middle sum is nonnegative, because $F^i>0$ and
$\la_1-\la_i>0$ for $i>m$. This proves
\eqref{eq:repeated-affine-bound}; \eqref{eq:repeated-final-bound} follows
from $f\ge M_1$ and the bound on $C_1$.
\end{proof}

\subsection{Completion of the proof of Theorem~\ref{thm:curvature}}

The constants in the preceding quadratic-form estimates can now be chosen
uniformly for the vector arising in the maximum-principle computation.
By \eqref{eq:affine-constraint} and $\la_1\ge1$, there exists a constant
$B_1$, depending only on the prescribed data, such that
\begin{equation}\label{eq:bounded-affine-parameter}
 s=\sumall F^ix_i=C_1\la_1,\qquad |C_1|\le B_1.
\end{equation}
The cone estimate \eqref{eq:cone-lower-bound} also gives
\begin{equation}\label{eq:c0-choice}
 \la_1F^1\ge\frac{k}{n}f\ge\frac{k}{n}M_1=:c_0>0.
\end{equation}
In particular, $c_0$ is a lower bound at the given vector $\la$.

First suppose that $m=1$. If
$0<f\le\varepsilon_I\la_1F^1$, Proposition~\ref{sm:main} and
\eqref{eq:bounded-affine-parameter}--\eqref{eq:c0-choice} imply
\begin{equation}\label{eq:small-f-geometric-bound}
 L_\la(x)\ge-C_{\mathrm{small}}\la_1^2,
 \qquad
 C_{\mathrm{small}}
 =\left(\frac12+\frac{2k^2n^2(n-k)}{2k-n}\right)\frac{B_1^2}{c_0}.
\end{equation}
If $f\ge\varepsilon_I\la_1F^1$, Proposition~\ref{lg:main} gives, whenever
$\la_1\ge\Lambda$,
\begin{equation}\label{eq:large-f-geometric-bound}
 L_\la(x)\ge-C_{\mathrm{large}}\la_1^2,
 \qquad C_{\mathrm{large}}=\frac{KB_1^2}{2M_1}.
\end{equation}
The threshold $\Lambda$ and the constant $K$ depend only on
$n,k,M_1,M_2$. If $\la_1<\Lambda$, the required estimate at $x_0$ is
already available.

When $m\ge2$, \eqref{eq:repeated-zero} and
Proposition~\ref{prop:repeated-bound} give the same type of bound, with
\[
 C_{\mathrm{rep}}=\frac{k-1}{2kM_1}B_1^2.
\]
Thus, with
\begin{equation}\label{eq:unified-C2}
 C_2=1+\max\{C_{\mathrm{small}},C_{\mathrm{large}},C_{\mathrm{rep}}\},
\end{equation}
we have, in every remaining case,
\begin{equation}\label{eq:unified-L-bound}
 L_\la(x)\ge-C_2\la_1^2.
\end{equation}
All constants in this inequality are independent of $N$.

Substitution into \eqref{eq:L-reduction} yields
\begin{equation}\label{eq:last-maximum-inequality}
 0\ge (N-1)\sumall F^i\la_i^2-(2C_2+C)\la_1-CN.
\end{equation}
Moreover, \eqref{eq:c0-choice} implies
\[
 \sumall F^i\la_i^2\ge F^1\la_1^2\ge c_0\la_1.
\]
Choose $N\ge1$, depending only on the prescribed data, so that
\[
 (N-1)c_0\ge2C_2+C+1.
\]
It follows from \eqref{eq:last-maximum-inequality} that
$\la_1(x_0)\le CN$. Together with the previously excluded bounded cases,
this gives a bound for $\la_1(x_0)$ depending only on the prescribed data.

Finally, by maximality of \eqref{eq:test-function}, for any $X\in M$,
\[
 \la_1(X)\le\la_1(x_0)
       \left(\frac{u(X)}{u(x_0)}\right)^N
 \le\left(\frac R{u_*}\right)^N\la_1(x_0).
\]
If the maximum lies on the boundary, the same calculation bounds
$\la_1(X)$ by $(R/u_*)^N\max_{\partial M}\la_1$.
Using $|\la_i|\le\la_1$ from \eqref{eq:positive-sum} now proves
\eqref{eq:curvature-estimate}.\hfill$\square$

\appendix
\section{Auxiliary polynomial and convexity facts}
\label{poly:section}

We record the polynomial facts used in Section~\ref{sm:section}.
A real polynomial is called real stable if it does not vanish when
each variable has strictly positive imaginary part.

\begin{lemma}[Real stability of elementary symmetric polynomials]
\label{poly:stability}
If $y_1,\ldots,y_n\in\mathbb C$ satisfy
$\operatorname{Im}y_i>0$, then
$\sigma_j(y_1,\ldots,y_n)\ne0$ for $1\leq j\leq n$.
\end{lemma}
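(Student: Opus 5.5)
We argue by induction on $n$, using the standard recurrence $\sigma_j(y_1,\ldots,y_n)=\sigma_j(y_1,\ldots,y_{n-1})+y_n\,\sigma_{j-1}(y_1,\ldots,y_{n-1})$. A cleaner route, which we will actually follow, is through a generating polynomial. For a single auxiliary variable $t$,
\[
 \prod_{i=1}^n (t+y_i)=\sum_{j=0}^n \sigma_j(y)\,t^{n-j}.
\]
This identity by itself does not isolate one $\sigma_j$. We therefore use the hyperbolicity and derivative structure instead. For real $x\in\mathbb R^n$, the polynomial $\sigma_n(x+s\mathbf 1)=\prod_i(x_i+s)$ is real-rooted in $s$. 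The key fact is that $\sigma_j$ is, up to a positive constant, the $(n-j)$-th directional derivative of $\sigma_n$ in the direction $\mathbf 1$:
\[
 \frac{d^{\,n-j}}{ds^{\,n-j}}\Big|_{s=0}\prod_{i=1}^n(y_i+s)=(n-j)!\,\sigma_j(y).
\]
It therefore suffices to show that the stable polynomial $P(y)=\prod_i y_i$ stays stable under the operator $\partial_{\mathbf 1}=\sum_i\partial/\partial y_i$, and then to iterate $n-j$ times.

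Fix $y$ with all $\operatorname{Im}y_i>0$ and let $g(s)=Q(y+s\mathbf 1)$, where $Q$ is any polynomial that does not vanish on the product of open upper half-planes. Then $g$ is a one-variable polynomial, and $g(s)\ne0$ whenever $\operatorname{Im}s\ge0$, since every coordinate of $y+s\mathbf 1$ then has positive imaginary part. Hence all roots of $g$ lie in the open lower half-plane, provided $g$ is nonconstant; if $g$ is a nonzero constant its derivative vanishes, a degenerate case checked separately below. By the Gauss--Lucas theorem, the roots of $g'$ lie in the convex hull of the roots of $g$, hence also in the open lower half-plane. In particular $g'(0)\ne0$, and $g'(0)=(\partial_{\mathbf 1}Q)(y)$. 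So $\partial_{\mathbf 1}Q$ does not vanish on the upper half-plane product, as long as it is not identically zero along the line.

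To rule out the degenerate case concretely, apply this with $Q=\partial_{\mathbf 1}^{r}\prod_i y_i=r!\,\sigma_{n-r}(y)$. The line polynomial $g(s)=r!\,\sigma_{n-r}(y+s\mathbf 1)$ has leading term $r!\binom{n}{n-r}s^{n-r}$, which is nonzero, so $g$ has degree exactly $n-r\ge1$ for $r\le n-1$. Its derivative is then a genuine polynomial of degree $n-r-1$, and Gauss--Lucas applies. When the degree of $g'$ is zero ($j=0$), $\sigma_0=1\neq0$ trivially. Induction on $r$ from $r=0$, where $\prod_iy_i\ne0$ is clear, up to $r=n-j$ gives $\sigma_j(y)\ne0$ for $1\le j\le n$.

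The only delicate point, and the main obstacle, is making sure that the one-variable restriction is nonconstant and has all its zeros strictly in the lower half-plane, so that Gauss--Lucas yields strict non-vanishing at $s=0$. The explicit leading coefficient $\binom{n}{j}$ of $\sigma_j(y+s\mathbf1)$ settles this.
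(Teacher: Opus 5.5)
Your proof is correct and is essentially the paper's argument. Both write $\sigma_j(y)$ as the $(n-j)$-th $s$-derivative at $s=0$ of $\prod_i(s+y_i)$, divided by $(n-j)!$, and use that differentiation keeps all zeros in the open lower half-plane; your line polynomial at stage $r$ is exactly the $r$-th $s$-derivative of $\prod_i(s+y_i)$. The only difference is that you cite Gauss--Lucas for this preservation step, whereas the paper proves it directly from $\operatorname{Im}(p'/p)<0$ on the closed upper half-plane.
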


\begin{proof}
Let $p_0(s)=\prod_i(s+y_i)$. Its roots lie in the open lower
half-plane. If a nonconstant polynomial $p$ has roots $\rho_j$ there
(open lower half-plane), counted with multiplicity, then, for $\operatorname{Im}s\geq0$,
\[
 \operatorname{Im}\frac{p'(s)}{p(s)}
 =\operatorname{Im}\sum_j\frac1{s-\rho_j}
 =-\sum_j\frac{\operatorname{Im}s-\operatorname{Im}\rho_j}{|s-\rho_j|^2}<0.
\]
Thus $p'$ has no zero in the closed upper half-plane. Iteration gives
$p_0^{(n-j)}(0)\ne0$. Since
$p_0^{(n-j)}(0)=(n-j)!\sigma_j(y)$, the assertion follows.
\end{proof}

A real homogeneous polynomial $P$ of degree $d$ is hyperbolic in
direction $e$ if $P(e)\ne0$ and $t\mapsto P(u+te)$ has only real
roots for every real $u$.

\begin{lemma}[G\aa rding~{\cite[Section~2, Theorem~2]{Garding}}]\label{poly:cone}
Suppose that $P$ is hyperbolic in $e$, with $P(e)>0$. The connected
component $\mathcal K$ of $\{P\ne0\}$ containing $e$ is an open
convex cone. Moreover, $P$ is hyperbolic in every direction
$v\in\mathcal K$.
\end{lemma}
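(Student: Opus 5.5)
The plan is to reduce the statement to Gårding's classical theory of hyperbolic polynomials in a single variable along lines. Let $\mathcal K$ be the component of $\{P\ne0\}$ containing $e$. For real $u$, write
\[
 P(u+te)=P(e)\prod_{j=1}^d\bigl(t+\rho_j(u)\bigr),
\]
where the leading coefficient is $P(e)$ by homogeneity and all $\rho_j(u)$ are real by hyperbolicity. Since the roots depend continuously on $u$ and $P(e)>0$, I will characterize the cone as
\[
 \mathcal K=\{u:\rho_j(u)>0\ \text{for all } j\}.
\]
This is the set of $u$ such that $u+te$ stays in $\{P\ne0\}$ for every $t\ge0$. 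Indeed, the set is open, and it is connected because it contains the segment from $u$ to a large multiple of $e$. Conversely, roots cannot cross $0$ along a path within $\{P\ne0\}$. The cone property follows from homogeneity, since $\rho_j(su)=s\rho_j(u)$ for $s>0$.

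The key step is to show that $P$ is hyperbolic in every direction $v\in\mathcal K$. I will follow Gårding's argument. Fix real $u$ and $v\in\mathcal K$, and consider the two-variable polynomial
\[
 g(s,t)=P(u+sv+te).
\]
For $s$ with $\operatorname{Im}s>0$, I will show that all roots $t$ satisfy $\operatorname{Im}t<0$. This holds for $u=0$, because the roots are $t=-s\rho_j(v)$ with $\rho_j(v)>0$. It extends to general $u$ by a continuity and homotopy argument along $u\mapsto \varepsilon u$. The point is that a root can never cross the real axis: a real $t$ would force $u+sv+te$ to be a zero with $s$ nonreal, and rewriting this in the form $(u+te)+sv$ contradicts hyperbolicity in $e$ together with the root-sign structure.

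From this I conclude that $s\mapsto P(u+sv)$ has only real roots. Taking $t=0$, a nonreal root $s$ would give a real root $t=0$, which contradicts the statement above. I also use that $P(v)>0$ since $v\in\mathcal K$. Hence $P$ is hyperbolic in $v$.

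Convexity then follows. For $v,w\in\mathcal K$, hyperbolicity in $v$ lets me repeat the characterization with $v$ in place of $e$, which shows that $w+tv\in\mathcal K$ for $t\ge0$. Homogeneity then places the whole segment between $v$ and $w$ in $\mathcal K$. I will also check that the component defined using $v$ coincides with $\mathcal K$, because both are components of $\{P\ne0\}$ containing $v$.

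The main obstacle is the no-crossing claim in the hyperbolicity step, which must be carried out carefully at degenerate points. The cleanest route is the continuity of the root sets of $g(s,\cdot)$ in $(s,u)$ together with the count of roots in the open lower half-plane. If needed I would simply cite Gårding, Theorem 2, since the statement is attributed there.
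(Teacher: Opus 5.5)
The paper does not prove this lemma; it cites G\aa rding's Theorem~2, which is also your fallback. Most of your self-contained argument is sound: the description $\mathcal K=\{u:\rho_j(u)>0\ \text{for all } j\}$, openness, star-shapedness about $e$, the cone property, and convexity once hyperbolicity in each $v\in\mathcal K$ is known. The central hyperbolicity step, however, is circular. You fix $s$ with $\operatorname{Im}s>0$ and follow the roots $t$ of $t\mapsto P(\varepsilon u+sv+te)$. If one of them reaches a real value $t_0$, you get $P(w+sv)=0$ with $w=\varepsilon u+t_0e$ real and $s$ nonreal. Excluding this is exactly the claim that $s\mapsto P(w+sv)$ has only real roots, i.e.\ hyperbolicity in $v$, which is what you are proving. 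Hyperbolicity in $e$ only forbids zeros $w+\zeta e$ with $w$ real and $\zeta$ nonreal. The sign information $\rho_j(v)>0$ only concerns the complex line $v+\mathbb C e$. Neither says anything about $w+sv$. So the difficulty is not degenerate points, since root continuity is harmless with the constant leading coefficient $P(e)$. It is how the roles of the two variables are assigned.

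The repair is to swap them: put the nonreal parameter on $e$ and follow the roots in the $v$-variable. For real $u$, $v\in\mathcal K$ and fixed $\alpha>0$, consider $s\mapsto P(\varepsilon u+sv+i\alpha e)$ for $0\le\varepsilon\le1$; its leading coefficient is $P(v)>0$. A real root $s$ would give $P(e)\prod_j\bigl(i\alpha+\rho_j(\varepsilon u+sv)\bigr)=0$, which is impossible. At $\varepsilon=0$ the roots are $s=-i\alpha/\rho_j(v)$, in the open lower half-plane. Since no root can cross $\mathbb R$, they stay there for all $\varepsilon\in[0,1]$. Letting $\alpha\to0^+$ puts the roots of $s\mapsto P(u+sv)$ in the closed lower half-plane. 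This polynomial has real coefficients, so nonreal roots would come in conjugate pairs; hence all roots are real. Here you cannot simply set the parameter to zero, as you do with $t=0$; the limit and the conjugation step replace it. With this change, the rest of your outline goes through.
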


\begin{lemma}[Logarithmic concavity in the cone]
\label{poly:log-concavity}
Under the assumptions of Lemma~\ref{poly:cone}, for
$Q\in\mathcal K$ and every real vector $V$,
\[
 D^2\log P(Q)[V,V]\leq0.
\]
\end{lemma}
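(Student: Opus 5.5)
The plan is to reduce the statement to a one-variable computation along the line $s\mapsto Q+sV$. The key input is that $P$ is hyperbolic not only in $e$ but also in the direction $Q$ itself, which is exactly what Lemma~\ref{poly:cone} supplies for $Q\in\mathcal K$.

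First, I will check that $\log P$ is defined near $Q$. The set $\mathcal K$ is the connected component of $\{P\ne0\}$ containing $e$, and $P(e)>0$. By connectedness and continuity, $P>0$ on all of $\mathcal K$, so in particular $P(Q)>0$. Since $\mathcal K$ is open, $\log P$ is smooth on a neighborhood of $Q$. By Lemma~\ref{poly:cone}, $P$ is hyperbolic in the direction $Q$.

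Next, I will factor along the line. Let $d$ be the degree of $P$. Hyperbolicity in $Q$ means that for the real vector $V$, the polynomial $t\mapsto P(V+tQ)$ has only real roots. By homogeneity its leading coefficient in $t$ is $P(Q)\ne0$, so
\[
 P(V+tQ)=P(Q)\prod_{j=1}^d(t+r_j),\qquad r_j\in\mathbb R .
\]
For real $s\ne0$, homogeneity gives
\[
 P(Q+sV)=s^dP(V+Q/s)=P(Q)\prod_{j=1}^d(1+sr_j).
\]
Both sides are polynomials in $s$ that agree for every $s\ne0$, so the identity also holds at $s=0$. Hence, for $|s|$ small,
\[
 \log P(Q+sV)=\log P(Q)+\sum_{j=1}^d\log(1+sr_j).
\]

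Finally, I will differentiate twice at $s=0$. This gives
\[
 D^2\log P(Q)[V,V]=\frac{d^2}{ds^2}\Big|_{s=0}\log P(Q+sV)=-\sum_{j=1}^d r_j^2\le0,
\]
which is the claim.

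The only nontrivial ingredient is hyperbolicity of $P$ in the new direction $Q$ rather than in $e$. That is the deep part of G\aa rding's theory, and here it is taken from Lemma~\ref{poly:cone}; the rest is the elementary factorization above.
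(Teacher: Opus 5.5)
Your proposal is correct and follows the paper's proof step for step. Both use hyperbolicity in the direction $Q$ from Lemma~\ref{poly:cone}, factor $P(V+sQ)=P(Q)\prod_j(s+r_j)$, pass by homogeneity (and continuity at $s=0$) to $P(Q+sV)=P(Q)\prod_j(1+sr_j)$, and differentiate twice at $s=0$ to get $-\sum_j r_j^2$. Your explicit check that $P>0$ on $\mathcal K$, so that $\log P$ is smooth near $Q$, is a detail the paper leaves implicit.
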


\begin{proof}
By Lemma~\ref{poly:cone}, $Q$ is a hyperbolicity direction. Write
\[
 P(V+sQ)=P(Q)\prod_{\nu=1}^d(s+\rho_\nu),
 \qquad\rho_\nu\in\mathbb R.
\]
Homogeneity, first for $t\ne0$ and then by continuity, gives
\[
 \frac{P(Q+tV)}{P(Q)}=\prod_{\nu=1}^d(1+t\rho_\nu).
\]
Taking two derivatives at zero yields
\[
 D^2\log P(Q)[V,V]=-\sum_{\nu=1}^d\rho_\nu^2\leq0.
\]
This also covers a degree drop in $t$ through zero values of some
$\rho_\nu$.
\end{proof}

\clearpage
\begingroup
\fontsize{10}{11.4}\selectfont
\setlength{\parskip}{0pt}

\endgroup


\begin{thebibliography}{99}
\setlength{\itemsep}{1pt}
\setlength{\parsep}{0pt}
\setlength{\parskip}{0pt}
\interlinepenalty=10000
\bibitem{Alexandrov1942}
A.~D.~Alexandrov,
\emph{Existence and uniqueness of a convex surface with a given integral curvature},
C. R. (Doklady) Acad. Sci. URSS (N.S.) \textbf{35} (1942), 131--134.

\bibitem{Alexandrov1956}
A.~D.~Alexandrov,
\emph{Uniqueness theorems for surfaces in the large. I},
Vestnik Leningrad. Univ. \textbf{11} (1956), 5--17 (Russian);
English translation, Amer. Math. Soc. Transl. (2) \textbf{21} (1962), 341--354.

\bibitem{BK}
I.~Bakelman and B.~Kantor,
\emph{Existence of spherically homeomorphic hypersurfaces in Euclidean space with prescribed mean curvature},
in \emph{Geometry and Topology}, Vol.~1,
Leningrad, 1974, 3--10 (Russian).

\bibitem{BF}
K.~J.~B\"or\"oczky and F.~Fodor,
\emph{The $L_p$ dual Minkowski problem for $p>1$ and $q>0$},
J. Differential Equations \textbf{266} (2019), no.~12, 7980--8033.
\href{https://arxiv.org/abs/1802.00933}{arXiv:1802.00933}.

\bibitem{BCD}
S.~Brendle, K.~Choi, and P.~Daskalopoulos,
\emph{Asymptotic behavior of flows by powers of the Gaussian curvature},
Acta Math. \textbf{219} (2017), 1--16.
\href{https://arxiv.org/abs/1610.08933}{arXiv:1610.08933}.

\bibitem{CNSI}
L.~Caffarelli, L.~Nirenberg, and J.~Spruck,
\emph{The Dirichlet problem for nonlinear second order elliptic equations. I. Monge--Amp\`ere equations},
Comm. Pure Appl. Math. \textbf{37} (1984), 369--402.

\bibitem{CNS}
L.~Caffarelli, L.~Nirenberg, and J.~Spruck,
\emph{Nonlinear second order elliptic equations. IV. Starshaped compact Weingarten hypersurfaces},
in \emph{Current Topics in Partial Differential Equations},
Y.~Ohya, K.~Kasahara, and N.~Shimakura (eds.),
Kinokuniya, Tokyo, 1986, 1--26.

\bibitem{CY}
S.-Y.~Cheng and S.-T.~Yau,
\emph{On the regularity of the solution of the $n$-dimensional Minkowski problem},
Comm. Pure Appl. Math. \textbf{29} (1976), 495--516.

\bibitem{Chu}
J.~Chu,
\emph{A simple proof of curvature estimate for convex solution of $k$-Hessian equation},
Proc. Amer. Math. Soc. \textbf{149} (2021), no.~8, 3541--3552.
\href{https://arxiv.org/abs/2003.00634}{arXiv:2003.00634}.

\bibitem{Garding}
L.~G\aa rding,
\emph{An inequality for hyperbolic polynomials},
Journal of Mathematics and Mechanics \textbf{8} (1959), no.~6, 957--965.

\bibitem{GG}
B.~Guan and P.~Guan,
\emph{Convex hypersurfaces of prescribed curvatures},
Ann. of Math. (2) \textbf{156} (2002), no.~2, 655--673.

\bibitem{GLL}
P.~Guan, J.~Li, and Y.~Y.~Li,
\emph{Hypersurfaces of prescribed curvature measure},
Duke Math. J. \textbf{161} (2012), no.~10, 1927--1942.
\href{https://doi.org/10.1215/00127094-1645550}{doi:10.1215/00127094-1645550}.

\bibitem{GLM}
P.~Guan, C.-S.~Lin, and X.~Ma,
\emph{The existence of convex body with prescribed curvature measures},
Int. Math. Res. Not. IMRN \textbf{2009} (2009), no.~11, 1947--1975.

\bibitem{GRW}
P.~Guan, C.~Ren, and Z.~Wang,
\emph{Global $C^2$-estimates for convex solutions of curvature equations},
Comm. Pure Appl. Math. \textbf{68} (2015), 1287--1325.
\href{https://arxiv.org/abs/1304.7062}{arXiv:1304.7062}.

\bibitem{HI}
Y.~Hu and M.~N.~Ivaki,
\emph{Prescribed $L_p$ curvature problem},
Adv. Math. \textbf{442} (2024), Paper No.~109566, 15~pp.
\href{https://arxiv.org/abs/2312.02122}{arXiv:2312.02122}.

\bibitem{LuSemi}
S.~Lu,
\emph{Curvature estimates for semi-convex solutions of Hessian equations in hyperbolic space},
Calc. Var. Partial Differential Equations \textbf{62} (2023), no.~9, Paper No.~257, 23~pp.
\href{https://arxiv.org/abs/2302.12409}{arXiv:2302.12409}.

\bibitem{LT}
S.~Lu and Y.-L.~Tsai,
\emph{A simple proof of curvature estimates for the $n-1$ Hessian equation},
Proc. Amer. Math. Soc. \textbf{154} (2026), no.~2, 893--904.
\href{https://arxiv.org/abs/2412.00576}{arXiv:2412.00576}.

\bibitem{Nirenberg1953}
L.~Nirenberg,
\emph{The Weyl and Minkowski problems in differential geometry in the large},
Comm. Pure Appl. Math. \textbf{6} (1953), 337--394.

\bibitem{Pogorelov1953}
A.~V.~Pogorelov,
\emph{On existence of a convex surface with a given sum of the principal radii of curvature},
Uspekhi Mat. Nauk \textbf{8} (1953), no.~3(55), 127--130 (Russian).

\bibitem{Pogorelov1978}
A.~V.~Pogorelov,
\emph{The Minkowski Multidimensional Problem},
translated from the Russian by V.~Oliker,
Scripta Series in Mathematics,
V.~H.~Winston \& Sons, Washington, D.C.;
Halsted Press, John Wiley \& Sons, New York, 1978.

\bibitem{RWone}
C.~Ren and Z.~Wang,
\emph{On the curvature estimates for Hessian equations},
Amer. J. Math. \textbf{141} (2019), no.~5, 1281--1315.
\href{https://arxiv.org/abs/1602.06535}{arXiv:1602.06535}.

\bibitem{RWtwo}
C.~Ren and Z.~Wang,
\emph{The global curvature estimate for the $n-2$ Hessian equation},
Calc. Var. Partial Differential Equations \textbf{62} (2023), no.~9, Paper No.~239, 50~pp.
\href{https://arxiv.org/abs/2002.08702}{arXiv:2002.08702}.

\bibitem{SX}
J.~Spruck and L.~Xiao,
\emph{A note on star-shaped compact hypersurfaces with prescribed scalar curvature in space forms},
Rev. Mat. Iberoam. \textbf{33} (2017), no.~2, 547--554.
\href{https://doi.org/10.4171/RMI/948}{doi:10.4171/RMI/948}.

\bibitem{TW}
A.~Treibergs and S.~W.~Wei,
\emph{Embedded hyperspheres with prescribed mean curvature},
J. Differential Geom. \textbf{18} (1983), no.~3, 513--521.

\bibitem{Yan}
J.~Yan,
\emph{Global Curvature Estimates for $\sigma_k$ Curvature Equations with $k\ge n/2$},
\href{https://arxiv.org/abs/2608.25665}{arXiv:2608.25665} (2026).

\bibitem{Yang}
F.~Yang,
\emph{Prescribed curvature measure problem in hyperbolic space},
Comm. Pure Appl. Math. \textbf{77} (2024), 863--898.
\href{https://doi.org/10.1002/cpa.22160}{doi:10.1002/cpa.22160}.

\bibitem{Zhang}
R.~Zhang,
\emph{$C^2$ estimates for $k$-Hessian equations and a rigidity theorem},
Adv. Math. \textbf{480} (2025), Part~A, Paper No.~110488.
\href{https://arxiv.org/abs/2408.10781v2}{arXiv:2408.10781v2}.
\end{thebibliography}
\end{document}